\def\address#1{\expandafter\def\expandafter\@aabuffer\expandafter
	{\@aabuffer{\affiliationfont{#1}}\relax\par
	\vspace*{13pt}}}
\newtheorem{theorem}{Theorem}[section]
\newtheorem{lemma}[theorem]{Lemma}
\newtheorem{proposition}[theorem]{Proposition}
\newtheorem{remark}[theorem]{Remark}
\def\thetheorem{\thesection.\arabic{theorem}}
\def\thesection{\arabic{section}}
\def\theequation {\thesection.\arabic{equation}}
\def\beq{\begin{equation}\displaystyle}
\def\eeq{\end{equation}}
\def\bel{\begin{equation} \displaystyle \begin{array}{l} }
\def\eel{\end{array} \end{equation} }
\def\bell{\begin{equation} \displaystyle \begin{array}{ll}  }
\def\eell{\end{array} \end{equation} }
\def\bea{\begin{eqnarray}}
\def\eea{\end{eqnarray} }
\def\bean{\begin{eqnarray*}}
\def\eean{\end{eqnarray*} }
\newenvironment{proof}{\noindent{\bf Proof.~}}
{{\mbox{}\hfill {\small \fbox{}}\\}}
\renewcommand\appendix{\bigskip {\noindent \Large \bf Appendix}
  \setcounter{section}{0}%
  \setcounter{subsection}{0}%
\setcounter{equation}{0}%
\setcounter{theorem}{0}%
\def\thetheorem{A.\arabic{theorem}}
\def\theequation {A.\arabic{equation}}}
\def\bar#1{{\overline #1}}
\begin{document}
\markboth{N.~Outada, N.~Vauchelet, T.~Akrid and M.~Khaladi}{From Kinetic Theory of Multicellular Systems to Hyperbolic Tissue Equations}


\title{\scshape{From Kinetic Theory of Multicellular Systems to Hyperbolic Tissue Equations:  Asymptotic Limits and Computing}}

\author{Nisrine Outada$^{1,2,3}$, Nicolas Vauchelet$^{2,4}$,\\
Thami Akrid$^{1}$ and Mohamed Khaladi$^{1,3}$\vspace{0.25cm}}

\vskip1cm

\date{$^{1}$D\'epartement de Math\'ematiques, Facult\'e des Sciences Semlalia,\\
Laboratoire LMDP, Universit\'e Cadi Ayyad,\\ B.P. 2390, 40000 Marrakesh, Morocco \\[1mm]
$^{2}$Sorbonne Universit\'es, UPMC Univ Paris 06, UMR 7598,\\
  Laboratoire Jacques-Louis Lions, Paris, France \\[1mm]
$^{3}$UMI 209 UMMISCO, 32 Avenue Henri Varagnat,\\
F-93 143 Bondy Cedex, France \\[1mm]
$^{4}$INRIA-Paris-Rocquencourt, EPC MAMBA, Domaine de Voluceau,\\
BP105, 78153 Le Chesnay Cedex, France 
}

\maketitle

{\bf Dedicated to Abdelghani Bellouquid who prematurely passed away on August 2015.}

\begin{abstract}
This paper deals with the analysis of the asymptotic limit toward the derivation of macroscopic equations for a class of equations modeling complex multicellular systems by methods of the kinetic theory. After having chosen an appropriate scaling of time and space, a Chapman-Enskog expansion is combined with a closed, by minimization, technique to derive hyperbolic models at the macroscopic level. The resulting macroscopic equations show how the macroscopic tissue behavior can be described by hyperbolic systems which seem the most natural in this context. We propose also an asymptotic-preserving well-balanced scheme for the one-dimensional hyperbolic model, in the two dimensional case, we consider a time splitting method between the conservative part and the source term where the conservative equation is approximated by the Lax-Friedrichs scheme.
\end{abstract}

{\bf Keywords.} Kinetic theory; multicellular systems; hyperbolic limits; chemotaxis; asymptotic preserving scheme; Lax-Friedrichs flux.

\section{Introduction}

The aim of this paper is the derivation of  macroscopic hyperbolic models of biological tissues from the underlying  description at the microscopic scale delivered by  kinetic theory methods. We consider the hyperbolic  asymptotic  limit for  microscopic system that connect the biological parameters, at the level of cells,  involved in this level of description.

The first step of the derivation of macroscopic models in biology from the underlying description at the microscopic scale is arguably due to Alt~\cite{alt} and Othmer, Dunbar and Alt~\cite{othmer}, who introduced a new modeling  approach by  perturbation of the transport equation by a velocity jump-process, which appears appropriate to model the velocity dynamics of cells modeled as living particles.   This method has been subsequently developed by various authors, among others, we cite~\cite{bellomo8,bellomo9,bellomo7,bellomo12,bellouquid5,chalub,dolak2,almeida,filbert1,hwang,hillen,hillen2,james,othmer1,perthame,stevens,vauchelet}.
The survey on mathematical challenges on the qualitative and asymptotic analysis of Keller and Segel type models~\cite{bellomo12} reports an exhaustive bibliography concerning different mathematical approach on the aforementioned topics. The interested reader can find a further updating of the research activity on the study of Keller-Segel models and their developments in~\cite{BBC,BT,L,W,WIN}, as well as on applications to population dynamics with diffusion~\cite{TW} and pattern formation in cancer~\cite{SSU}.

Different time-space scalings lead to equations characterized by different parabolic or hyperbolic  structures. Different combinations of parabolic and hyperbolic scales also are used, according to the dispersive or non-dispersive nature of the biological system under consideration. The parabolic (low-field) limit of kinetic equations leads to a drift-diffusion type system (or reaction-diffusion system) in which the diffusion process
dominate the behavior of the solutions\cite{goudon,villani}. On the other hand, in the hyperbolic (high-field) limit the influence of the diffusion terms is of lower (or equal) order of magnitude in comparison with other convective or interaction terms.

Possible applications refer to modeling cell invasion, as well as chemotaxis and haptotaxis phenomena and related  pattern formation\cite{alt,bellomo7,almeida}. Models with finite propagation speed appear to be consistent with physical reality rather than parabolic models. This feature is also induced by the essential characteristics of living organisms who have the ability to sense signals in the environment and adapt their movements accordingly.

Our analysis is  quite general as it can be applied to different species in response to multiple (chemo)tactic cues \cite{corrias,dolak,hillen4,patlak}. Therefore, the derivation of hyperbolic models can contribute to further improvements in modeling biological reality. In fact, it seems that the approach introduced by Patlak~\cite{patlak} and Keller-Segel \cite{keller} is not always sufficiently precise to describe some structures as the evolution of bacteria movements, or the human endothelial cells movements on matrigel that lead to the formation of networks interpreted as the beginning of a vasculature \cite{perthame}. These structures cannot be explained by parabolic models, which generally lead to pointwise blow-up~\cite{bellomo12}, moreover the numerical experiments show the predictability of the hyperbolic models in this context.

We now briefly describe the contents of this paper. Section 2, presents the kinetic model and the scaling deemed to provide the general framework appropriate to derive, by asymptotic analysis, models at the macroscopic scale. Section 3, referring to~\cite{bellomo8}, presents the general kinetic framework to be used toward the asymptotic analysis. Section 4, shows how specific models can be derived by the approach of our paper. Section 5, presents some computational simulations to show the predictive ability of the models derived in this paper and looks ahead to research perspectives.

\section{Kinetic mathematical model}
Let us consider a physical system constituted by a large number of cells interacting in a biological environment. The microscopic state is defined by the mechanical variable $\{x,v\}$, where $\{x,v\}\in \Omega \times V\subset \mathbb{R}^{d}\times \mathbb{R}^{d} $, $d=1,2,3$. The statistical collective description of the system is encoded in the statistical distribution $f = f(t,x,v):[0,T] \times \Omega \times V \rightarrow \mathbb{R}_+$, which is called a distribution function. We also assume that the transport in position is linear with respect to the velocity. In this paper, we are interested in the system of different species in response to multiple chemotactic cues. The model, for $i = 1,\cdots,m,$ reads:
\begin{small}
\begin{eqnarray}\label{SYST}
\begin{cases}
\displaystyle
\vspace{0.25cm}
\partial_t f + v \cdot \nabla_{x}f= L(g,f)+\widetilde{H}(f,g),\\
\displaystyle
\tau_{i} \partial_t g_{i}+ v \cdot \nabla_{x}g_{i}= l_{i}(g_{i})+G_i(f,g),
\end{cases}
\end{eqnarray}\end{small}
where $f=f(t,x,v)$ and $g_i=g_i(t,x,v)$ denotes respectively the density of cells and the density (concentration) of multiple tactic cues and $g=(g_1,\cdots,g_m)^T$.

The operators $L$ and $l_{i}$ model the dynamics of biological organisms by  velocity-jump process. The set of possible velocities is denoted by $V$, assumed to be bounded and radially symmetric. The operators $\widetilde{H}$ and $G_i$ describe proliferation/destruction interactions. The dimensionless time $\tau_i \in \mathbb{R}_+$ indicates that the spatial spread of $f$ and $g_i$ are on different time scales. The case $\tau_i=0$ corresponds to a steady state assumption
 for $g_i$.

The problem of studying the relationships between the various scales of description,  seems to be one of the most important problems of the mathematical modelling of complex systems . Different structures at the macroscopic scale can be obtained corresponding to different spacetime scales. Subsequently, more detailed assumptions on the biological interactions lead to different models of pattern formation. However, a more recent tendency been the use hyperbolic equations to describe intermediate regimes at the macroscopic level rather than parabolic equations, for example \cite{bellomo8,bellomo9,filbert1,hillen2}.

The next section deals with the derivation of macroscopic equations using a Champan-Enskog type perturbation approach for \eqref{SYST}$_1$ and a closure by minimization method for \eqref{SYST}$_2$. Our purpose is to derive hyperbolic-hyperbolic macroscopic model. The first approach consists in expanding the distribution function in terms of a
small dimensionless parameter related to the intermolecular distances (the space scale dimensionless parameter). In \cite{filbert1}, a hydrodynamic limit of such kinetic model was used to derive hyperbolic models for chemosensitive movements. While the closure method consists that the (m+1)-moments of the minimizer approximate the (m+1)-moments of the true solution.

\section{Asymptotic analysis toward derivation of hyperbolic systems}\label{sehyper}
%
\subsection{The kinetic framework}
%
Let us now consider the first equation in \eqref{SYST}. We assume a hyperbolic scaling for this population it means that
we scale time and space variables $t\rightarrow \varepsilon t $ and $x\rightarrow \varepsilon x $, where  $\varepsilon$ is
a small parameter which will be allowed to tend to zero, see \cite{bellomo8} for more details. We deal also with the small interactions i.e
$\widetilde{H}(f,g)=\varepsilon H(f,g)$. Then, we obtain the following transport equation for the distribution function $f=f(t,x,v)$
\begin{equation}\label{eqs}
\partial_t f+ v \cdot \nabla_{x}f= \frac{1}{\varepsilon}L(g,f)+H(f,g) ,
\end{equation}
where the position $x\in \Omega \subset \mathbb{R}^{d}$ and the velocity $v\in V \subset \mathbb{R}^{d}$.
In addition, the analysis developed is based on the assumption that $L$ admits the following decomposition:
\begin{equation}\label{decom}
   L(g,f)=L^0(f)+\varepsilon L^1(g,f),
   \end{equation}
 with $L^1$ in the form
 \begin{equation}\label{expa}
 L^1(g,f)=\sum_{i=1}^{m}L_{i}^1[g_{i}](f).
 \end{equation}
 The operator $L^{0}$ represents the dominant part of the turning kernel modeling the tumble process in the absence of chemical substance
and ${L}_{i}^{1}$ is the perturbation due to chemical cues. The parameter $\varepsilon$ is a time scale which here refers
to the turning frequency.
 The equation \eqref{eqs} becomes
 \begin{equation}\label{eqsc}
\partial_t f+ v \cdot \nabla_{x}f= \frac{1}{\varepsilon}L^{0}(f)+\sum_{i=1}^{m}{L}_{i}^{1}[g_{i}](f)+H(f,g).
\end{equation}
The most commonly used assumption on the perturbation turning operators ${L}^{0}$, $L_i^1$ and $l_i$ is that they are integral operators and read:
\begin{equation}\label{n0}
L^{0}(f)=\int_{V}\left(T^{0}(v,v')f(t,x,v')-T^{0}(v',v)f(t,x,v)\right)dv',
\end{equation}
\begin{equation}\label{n1}
L_{i}^{1}[g_{i}](f)=\int_{V}\left(T_{i}^{1}(g_{i},v,v')f(t,x,v')-T_{i}^{1}(g_{i},v',v)f(t,x,v)\right)dv',
\end{equation}
and
\begin{equation}\label{n2}
l_{i}(f)=\int_{V}\left(K_i(v,v')f(t,x,v')-K_i(v',v)f(t,x,v)\right)dv',
\end{equation}
The turning kernels $ T^{0}(v,v')$, $T_{i}^{1}(g_{i},v,v')$ and $K_i(v,v')$ describe the reorientation of cells, i.e. the random velocity changes from the previous velocity $v'$ to the new $v$.

The following assumptions on the turning operators are needed to develop the hyperbolic asymptotic analysis:\\
$\bullet$ $\textbf{Assumption H0:}$ For all $i=1,\cdots,m$, the turning operators  $L^0$, $L_{i}$ and $l_{i}$ conserve the local mass:
\begin{equation}\label{mass}
\int_{V} L^{0}(f)dv=\int_{V} L_{i}^{1}[g_{i}](f) dv= \int_{V} \l_{i}(f)=0.
\end{equation}
$\bullet$ $\textbf{Assumption H1:}$ The turning operator $L^0$ conserve the population flux:
\begin{equation}\label{flu1}
\int_{V} v L^{0}(f)dv= 0.
\end{equation}
$\bullet$ $\textbf{Assumption H2:}$ For all $ n\in [0,+\infty[$ and $u \in \mathbb{R}^d$, there exists a unique function $F_{n,u} \in L^{1}(V,(1+|v|)dv)$ such that
\begin{equation}\label{stea1}
L^{0}(F_{n,u})=0, \quad \int_{V}F_{n,u} dv=n \quad \text{and} \quad \int_{V} vF_{n,u} dv= nu.
\end{equation}
It is clear, from \eqref{n0}-\eqref{n2}, that $L^0, L_{i}$ and $l_{i}$ satisfy the assumption $\textbf{H0}.$

The following lemma,  whose proof can be found in \cite{hillen4}, will be used a few times,
\begin{lemma}\label{lem}
Assume that $ V= s \mathbb{S}^{d-1}$, $s>0 $, which corresponds to the assumption that any individual of the population chooses any velocity with a fixed norm s(speed). Then,
$$
\int_{V} v dv=0, \quad \int_{V} v^iv^j dv=\frac{|V|s^2}{d}\delta_{ij} \quad \text{and} \quad \int_{V} v^iv^jv^k dv=0,
$$
where $v=(v^1,\cdots,v^d)$ and $\delta_{ij}$ denotes the Kronecker symbol, and the notation $\mathbb{S}^{d-1}$ corresponds to the unit sphere in dimension d.
\end{lemma}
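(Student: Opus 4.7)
The plan is to exploit the two basic symmetries of the set $V=s\mathbb{S}^{d-1}$: the reflection symmetry $v\mapsto -v$, which kills odd polynomial integrands, and the rotational symmetry $v\mapsto Rv$ for any $R\in O(d)$, which equates integrals of quantities that differ only by permuting or rotating coordinates.

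First, for the identity $\int_V v\,dv=0$, I would observe that the surface measure on $s\mathbb{S}^{d-1}$ is invariant under $v\mapsto -v$, while the integrand is odd, so the integral equals its own negative and hence vanishes. Exactly the same reflection argument handles $\int_V v^iv^jv^k\,dv=0$, since $v^iv^jv^k$ is a homogeneous polynomial of odd degree.

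For the middle identity, I would split into the cases $i\neq j$ and $i=j$. When $i\neq j$, the reflection $v^i\mapsto -v^i$ (leaving the other coordinates fixed) is an orthogonal transformation preserving $V$ and its measure, but it flips the sign of $v^iv^j$, so again $\int_V v^iv^j\,dv=0$. When $i=j$, I would use rotational invariance: for any two indices $i,k$ there is a rotation $R\in O(d)$ swapping the $i$-th and $k$-th coordinates, and since $R$ preserves the measure on $V$, the integrals $\int_V (v^i)^2\,dv$ are all equal to a common value $c$. Summing over $i$ gives $dc=\int_V |v|^2\,dv = s^2|V|$, from which $c=|V|s^2/d$.

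There is essentially no obstacle here; the whole content of the lemma is invariance of the spherical measure under the orthogonal group, and the three formulas are direct consequences. The only thing to be slightly careful about is the convention for the measure $dv$ on $V$ (surface measure on the sphere of radius $s$) so that $|V|$ denotes the total surface area, which is consistent with the normalization implicitly used in Assumption H2.
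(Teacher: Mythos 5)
Your argument is correct. Note that the paper itself does not prove this lemma at all --- it simply defers to the reference \cite{hillen4} --- so there is no in-paper proof to compare against; your reflection-and-rotation symmetry argument is the standard one and is essentially what that reference contains. The only cosmetic point is that a coordinate swap lies in $O(d)\setminus SO(d)$, so it is a reflection rather than a rotation, but since the surface measure on $s\mathbb{S}^{d-1}$ is invariant under all of $O(d)$ this does not affect the argument, and your identification of $\int_V |v|^2\,dv = s^2|V|$ (using $|v|=s$ on $V$) correctly pins down the constant.
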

\subsection{Hydrodynamic limit}
In this subsection, we use the last assumptions to derive an hyperbolic system on macroscopic scale for small perturbation parameter.

Let $f$ be solution of the equation \eqref{eqsc} and consider the density of cells $n$ and  the flux $u$ defined by:
\begin{equation}\label{hyd1}
n(t,x)=\int_{V}f(t,x,v) dv \quad \text{and} \quad n(t,x)u(t,x)=\int_{V}v\, f(t,x,v) dv.
\end{equation}
To derive the equations for the moments in \eqref{hyd1}, we multiply \eqref{eqsc} by $1$ and $v$ respectively, and integrate over $V$ to obtain the following system
\begin{eqnarray}\label{hyp1}
\begin{cases}
\vspace{0.25cm}
\displaystyle
\partial_t n+div_x(nu)=\int_{V}H(f,g)dv,\\
\displaystyle
\partial_t(nu)+div_x \int_{V} v\otimes v f(t,x,v) dv=\sum_{i=1}^{m}\int_{V}
 v L_{i}^{1}[g_{i}](f)dv+\int_{V}vH(f,g)dv.
\end{cases}
\end{eqnarray}
Now let $g_{i}$ be a solution of the following (i)-equation
\begin{equation}\label{ieq}
\tau_{i}\frac{\partial g_{i}}{\partial t}+ v \cdot \nabla_{x}g_{i}= l_{i}(g_{i})+G_i(f,g),
\end{equation}
 and set
\begin{equation}\label{hyd2}
N_{i}(t,x)=\int_{V} g_{i}(t,x,v) dv \quad  \text{and} \quad  N_{i}(t,x)U_i(t,x)=\int_{V} v g_{i}(t,x,v) dv.
\end{equation}
To derive the equations for moments in \eqref{hyd2}, we multiply the equation \eqref{ieq} by $1$ and $v$ respectively and integrate over $V$ to obtain the following system
\begin{small}
\begin{eqnarray}\label{syst22}
\begin{cases}
\vspace{0.25cm}
\displaystyle{\tau_{i} \partial_t N_{i} +div_x(N_{i}U_i)=\int_{V} G_i(f,g)dv,}\\
\displaystyle{\tau_{i} \partial_t(N_{i}U_i)+div_x \int_{V} v\otimes v g_{i}(t,x,v) dv= \int_{V} v l_i(g_i)dv+\int_{V} v G_i(f,g)dv.}
\end{cases}
\end{eqnarray}
\end{small}

Finally, \eqref{hyp1} and \eqref{syst22} yield the following system
\begin{small}
\begin{eqnarray}\label{systb}
\begin{cases}
\displaystyle
\partial_t n+div_x(nu)=\int_{V}H(f,g)dv,\\
{}\\
\displaystyle
\partial_t(nu)+div_x \int_{V} v\otimes v f(t,x,v) dv=\sum_{i=1}^{m}\int_{V}
 v L_{i}^{1}[g_{i}](f)dv+\int_{V}vH(f,g)dv,\hspace*{-0.2cm}\\
 {}\\
\displaystyle
\tau_{i} \partial_t N_{i} + div_x(N_{i}U_i)=\int_{V} G_i(f,g)dv,\\
{}\\
\displaystyle
\tau_{i} \partial_t(N_{i}U_i) + div_x \int_{V} v\otimes v g_{i}(t,x,v) dv= \int_{V} v l_i(g_i)dv+\int_{V} v G_i(f,g)dv.\\
\end{cases}
\end{eqnarray}\end{small}
In the following, we are interested to close the system \eqref{systb}. We start by two first equations of \eqref{systb}, we introduce $f_1$ such that
$$
\varepsilon f_1(t,x,v)= f(t,x,v)-F_{n(t,x),u(t,x)}(v),
$$
where the equilibrium distribution $F_{n,u}$ is defined by \eqref{stea1}. Then, we deduce
$$
\int_{V} f_1(t,x,v) dv=0,\,\,\  \int_{V}vf_1(t,x,v) dv=0.
$$
Then, we assume the following asymptotic expansion in order 1 in $\varepsilon$,
\begin{equation}\label{hypo}
H(\phi+\varepsilon \psi,\theta)=H(\phi,\theta)+O(\varepsilon) \; \text{and} \; G_i(\phi+\varepsilon \psi,\theta)=G_i(\phi,\theta)+O(\varepsilon).
\end{equation}
Replacing now $f$ by its expansion $ f(t,x,v)=F_{n(t,x),u(t,x)}(v)+\varepsilon f_1(t,x,v)$ and using the first equality of \eqref{hypo}, yields
\begin{small}
\begin{eqnarray}\label{hyp22}
\begin{cases}
\displaystyle
\partial_t n+div_x(nu)=\int_{V}H(F_{n,u},g)dv+O(\varepsilon),\\
{}\\
\displaystyle
\partial_t(nu)+div_x \int_{V}v\otimes v F_{n,u}(v) dv= \sum_{i=1}^{m}\int_{V} v L_{i}^{1}[g_{i}](F_{n,u})dv
+ \int_{V}vH(F_{n,u},g)dv +O(\varepsilon).
\end{cases}
\end{eqnarray}\end{small}
Therefore
$$
\int_{V}v\otimes v F_{n,u}(v) dv=\int_{V} (v-u)\otimes (v-u) F_{n,u}(v) dv + nu\otimes u= P+ nu\otimes u,
$$
where the pressure tensor $P$ is given by
\begin{equation} \label{press1}
P(t,x)=\int_{V}(v-u(t,x))\otimes (v-u(t,x))F_{n(t,x),u(t,x)}(v) dv.
\end{equation}
Since $L_{i}^{1}$ conserves the local mass \eqref{mass}, the system \eqref{hyp22} becomes
\begin{small}
\begin{eqnarray}\label{hyp23}
\begin{cases}
\vspace{0.25cm}
\displaystyle
\partial_t n+div_x(nu)=\int_{V}H(F_{n,u},g)dv+O(\varepsilon),\\
\begin{array}{ll}
\displaystyle
\partial_t(nu)+div_x(P+nu\otimes u)=
& \displaystyle \sum_{i=1}^{m}\int_{V} (v-u)L_{i}^{1}[g_{i}](F_{n,u})dv  \\
& \displaystyle +\int_{V}vH(F_{n,u},g)dv +O(\varepsilon).
\end{array}
\end{cases}
\end{eqnarray}\end{small}
\begin{remark}\label{remark}
It is easy to see that the influence of the turning operator $ L^{0}$ on the macroscopic equations \eqref{hyp23} only comes into play
through the stationary state $F_{n,u}$ in the computation of the right-hand side of the second equation in \eqref{hyp23} and the pressure
tensor $P$. While the structure of the turning operator $L_{i}^{1}$ determines the effect of the chemical cues.
\end{remark}

Taking into account the system \eqref{hyp23} and using the second equality of \eqref{hypo} the system \eqref{systb} reads now
\begin{small}
\begin{eqnarray}\label{syfo}
\begin{cases}
\displaystyle
\partial_t n+div_x(nu)=\int_{V}H(F_{n,u},g)dv+O(\varepsilon),\\
{}\\
\displaystyle
\partial_t(nu)+div_x(P+nu\otimes u)=\sum_{i=1}^{m}\int_{V} (v-u)L_{i}^{1}[g_{i}](F_{n,u})dv+\int_{V}vH(F_{n,u},g)dv +O(\varepsilon),\\
{}\\
\displaystyle
\tau_{i} \partial_t N_{i} + div_x(N_{i}U_i)=\int_{V} G_i(F_{n,u},g)dv +O(\varepsilon),\\
{}\\
\displaystyle
\tau_{i} \partial_t(N_{i}U_i)+ div_x (Q(g_i))= \int_{V} v l_i(g_i)dv+\int_{V} v G_i(F_{n,u},g)dv+O(\varepsilon),\\
\end{cases}
\end{eqnarray}\end{small}
with
\begin{equation*}
Q(g_i):=\int_{V} v\otimes v g_{i}(t,x,v) dv=\big(\int_{V} v^k v^l g_{i}(t,x,v) dv\big )_{1\leq  k,l \leq d}.
\end{equation*}

It can be observed that system \eqref{syfo} is not yet closed. Indeed, it can be closed by looking for an approximate expression of $Q(g_i)$. The approach consists in deriving a function $a_i(t,x,v)$ which minimizers the $L^2(V)$-norm under the constraints that it has the same first moments, $N_i$ and $N_iU_i$, as $g_i$. Once $a_i$ this function has been found, we replace $Q(g_i)$ by $Q(a_i)$, and $g$ by $a$ in the others terms.

Toward this aim, we consider the set of velocities $V=s \mathbb{S}^{d-1}$ with $s>0$ and $\mathbb{S}^{d-1}$ the unit sphere of $\mathbb{R}^{d}$. Let us introduce Lagrangian multipliers $\eta_i$ and $\displaystyle{\overrightarrow{\xi_i}=(\xi_i^1,\cdots,\xi_i^d)}$ respectively scalar and vector, and define the following operator:
 \begin{eqnarray*}
M(a_i)=\frac{1}{2}&\int_{V} a_{i}^2(t,x,v)dv-\eta_i(\int_{V} a_{i}(t,x,v)dv-N_{i}(t,x,v) )\\
&-\overrightarrow{\xi_i}.(\int_{V} v a_{i}(t,x,v)dv-N_{i}(t,x,v)U_{i}(t,x,v) ).
\end{eqnarray*}

The Euler-Lagrange equation (first variation) of $M(a_i)$ reads $a_i=\eta_i+\overrightarrow{\xi_i}.v$. We use the constraints to define $\eta_i$ and $\overrightarrow{\xi_i}$. First, from the first equality in \eqref{hyd2} one gets easily $\eta _i=\frac{N_i}{|V|}$. Next, from Lemma \ref{lem} one obtains
$$
N_i(t,x)U_i(t,x)=\int_{V} v  a_{i}(t,x,v)dv=|V|\frac{s^2}{d}\overrightarrow{\xi_i},
$$
then $\overrightarrow{\xi_i}=\frac{d}{|V|s^2}N_i(t,x)U_i(t,x)$.

 Therefore,
\begin{equation}\label{clo1}
a_i(t,x,v)=\frac{1}{|V|}\left(N_i(t,x)+\frac{d}{s^2} N_i(t,x)U_i(t,x).v \right).
\end{equation}
Consequently, using again lemma \ref{lem}, the pressure tensor $ Q(a_i)$  is
 $$Q(a_i)=\int_{V} v\otimes v a_i(t,x,v) dv=\frac{1}{|V|}\int_{V} v\otimes v N_{i}dv=\frac{s^2}{d}N_{_i}\mathbb{I}_d,$$
where $\mathbb{I}_d$ denotes the $d\times d$ identity matrix.
Thus, the following nonlinear coupled hyperbolic model is derived:
\begin{small}
\begin{eqnarray}\label{nwsyst}
\begin{cases}
\displaystyle
\partial_t n+div_x(nu)=\int_{V}H(F_{n,u},a)dv+O(\varepsilon),\\
{}\\
\displaystyle
\partial_t(nu)+div_x(P+nu\otimes u)=\sum_{i=1}^{m}\int_{V} (v-u)L_{i}^{1}[a_{i}](F_{n,u})dv+ \int_{V}vH(F_{n,u},a)dv+O(\varepsilon),\\
{}\\
\displaystyle
\tau_{i} \partial_t N_{i} + div_x(N_{i}U_i)=\int_{V} G_i(F_{n,u},a)dv+O(\varepsilon),\\
{}\\
\displaystyle
\tau_{i} \partial_t(N_{i}U_i)+\frac{s^2}{d}\nabla_{x} N_{_i} = \int_{V} v l_i(a_i)dv+\int_{V} v G_i(F_{n,u},a)dv+O(\varepsilon),\\
\end{cases}
\end{eqnarray}\end{small}
with $a=(a_1,\cdots,a_m)$.
\begin{remark}
The second variation of $M$ is $\delta^2 M(a_i)=1$, then the extremum $a_i(t,x,v)$ is a minimum.
\end{remark}
%

\section{Derivation of models}
This section shows how the tools reviewed in the preceding section can be used to derive models. Let us consider the model defined by choosing the stationary state and the turning kernels. Consider $F_{n,u}$ as follows:
 \begin{equation}\label{clo2}
F_{n,u}(v)=\frac{1}{|V|}(n+\frac{d}{s^2}nu.v),
\end{equation}
It is easy to check that $F_{n,u}$ satisfies the assumptions \eqref{flu1}-\eqref{stea1}.\\
We take the turning kernel $T^{0}$ in \eqref{n0} in the form
$$
T^{0}(v,v')=\frac{\mu_0}{|V|}(1+\frac{d}{s^2}v \cdot v'),
$$
with $\mu_0$ a real constant, and consider that the turning kernel $ T_{i}^{1}$ in \eqref{n1} depends on the velocity $v'$, on the population $g_i$, and on its gradient, defined by:
$$
T_{i}^{1}[g_i](v,v')=\frac{\mu_1}{|V|}-\frac{\mu_{2}d}{|V| s^2}v'\cdot \alpha(<g_i>),
$$
where $\alpha $ is a mapping $\mathbb{R} \longrightarrow \mathbb{R}^d$, $\mu_1,\,\mu_2 $ are real constants and $<\cdot>$ stands for the $(v)$-mean of
a function, i.e $\displaystyle{<h>:=\int_{V} h(t,x,v)dv}$ for $h \in L^2(V)$.\\
Therefore, the turning operator $L^0$ is given by:
\begin{align}
L^{0}(f)&=\int_{V}\left(T^{0}(v,v')f(t,x,v')-T^{0}(v',v)f(t,x,v)\right) dv  \nonumber\\
&=\mu_0 \left( \frac{1}{|V|}(n+\frac{d}{s^2}nv \cdot u)-f(v)\right)\nonumber\\
&=\mu_0 \left(F_{n,u}(v)-f(v)\right),
\end{align}
then, $L^{0}$ is a relaxation operator to $F_{n,u}$.\\
While, the turning operator $L_{i}^{1}[g_{i}]$ can be computed as follows:
\begin{align*}
L_{i}^{1}[g_{i}](f)&=\int_{V}\left(T_{i}^{1}(g_{i},v,v')f(t,x,v')-T_{i}^{1}(g_{i},v',v)f(t,x,v)\right)dv'\\
&=\frac{\mu_1}{|V|}n-\frac{\mu_{2}d}{|V| s^2}nu.\alpha(<g_i>)-\mu_1f(v)+\frac{\mu_{2}d}{ s^2}vf(v)\cdot \alpha(<g_i>)\\
&=\mu_1\left(\frac{n}{|V|}-f(v)\right)-\frac{\mu_{2}d}{ s^2}\left(\frac{nu}{|V|}-vf(v)\right)\cdot \alpha(<g_i>).
\end{align*}
Thus,
\begin{align*}
\quad \quad \int_{V}&(v-u) L_{i}^{1}[g_{i}](F_{n,u})dv\\
&= \mu_1 \int_{V} v \left(\frac{n}{|V|}-F_{n,u}(v)\right)dv-\frac{\mu_{2}d}{ s^2} \int_{V} v\left(\frac{nu}{|V|}-vF_{n,u}(v)\right)\cdot \alpha(<g_i>)dv\\
&=-\mu_1 nu+ \mu_2 n \alpha(<g_i>).
\end{align*}
Consequently,
\begin{equation}\label{tu1}
\sum_{i=1}^{m} \int_{V} (v-u)L_{i}^{1}[g_{i}](F_{n,u})dv= -\mu_1 m nu+ \sum_{i=1}^{m}\mu_2 n \alpha(<g_i>).
\end{equation}
Finally, take the turning kernel $K_i$ in \eqref{n2} as follows:
$$K_i(v,v')=\frac{\sigma_i}{|V|},\,\,\, $$
with $\sigma_i$ is a real constant.\\
Then, the turning operator $l_i$ is computed as follows:
\begin{equation*}
l_i(h)=\int_{V} \left(K_i(v,v')h(t,x,v')-K_i(v',v)h(t,x,v)\right)dv'=\sigma_i \left(\frac{<h>}{|V|}- h \right).
\end{equation*}
Therefore,
\begin{align*}
\int_{V} v l_i(h)dv
&=- \sigma_i \int_{V} v h dv.
\end{align*}
Consequently,
\begin{equation}
\int_{V} v l_i(a_i)dv=-\sigma_i N_iU_i.
\end{equation}
Now we compute the pressure tensor $P$. By using lemma \ref{lem}, we have
\begin{equation*}
\int_{V} v\otimes v F_{n,u} dv=\int_{V} v\otimes v\frac{1}{|V|}(n+\frac{d}{s^2} nu.v)=\frac{s^2}{d}n \mathbb{I}_d.
\end{equation*}
Thus,
\begin{equation}\label{press11}
P+nu\otimes u=\frac{s^2}{d}n \mathbb{I}_d.
\end{equation}
Finally, the system \eqref{nwsyst} becomes, at first order with respect to $\varepsilon$,
\begin{equation}\label{crsys}
\begin{cases}
\vspace{0.25cm}
\displaystyle
\partial_t n+div_x(nu)=\int_{V}H(F_{n,u},a) dv,\\
\vspace{0.25cm}
\displaystyle
\partial_t(nu)+\frac{s^2}{d}\nabla_{x} n=-\mu_1 m nu+ \mu_2 \sum_{i=1}^{m} n \alpha(N_i)+\int_{V}v H(F_{n,u},a) dv,\\
\vspace{0.25cm}
\displaystyle
\tau_{i} \partial_t N_{i}+ div_x(N_{i}U_i)=\int_{V} G_i(F_{n,u},a)dv,\\
\displaystyle
\tau_{i} \partial_t(N_{i}U_i)+\frac{s^2}{d}\nabla_{x} N_{_i} = -\sigma N_iU_i+\int_{V}v G_i(F_{n,u},a)dv,
\end{cases}
\end{equation}
where $a$ and $F_{n,u}$ are defined in \eqref{clo1} and \eqref{clo2}.
\begin{theorem}
 If we consider for all $i=1,\cdots,m$, $\alpha(N_i)=\alpha_i(N_i)\nabla_{x} N_{_i}$, $H$ and  $G_i$ satisfy the assumption \eqref{hypo} then, we obtain
 the following system at first order with respect to $\varepsilon$,
\begin{eqnarray}\label{cr1sys}
\begin{cases}
\vspace{0.25cm}
\displaystyle
\partial_t n+div_x(nu)=\int_{V}H(F_{n,u},a) dv,\\
\vspace{0.25cm}
\displaystyle
\partial_t(nu)+\frac{s^2}{d}\nabla_{x} n=-\mu_1 m nu+ \mu_2 \sum_{i=1}^{m} n \alpha_i(N_i)\nabla_{x} N_{_i}+\int_{V}v H(F_{n,u},a) dv,\\
\vspace{0.25cm}
\displaystyle
\tau_{i} \partial_t N_{i} + div_x(N_{i}U_i)=\int_{V} G_i(F_{n,u},a)dv,\\
\displaystyle
\tau_{i} \partial_t(N_{i}U_i)+\frac{s^2}{d}\nabla_{x} N_{i} = -\sigma_i N_iU_i+\int_{V}v G_i(F_{n,u},a)dv.
\end{cases}
\end{eqnarray}
\end{theorem}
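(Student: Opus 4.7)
The plan is to observe that the theorem is a direct specialization of system \eqref{crsys} derived in the preceding section under an additional structural hypothesis on the chemotactic response $\alpha$. The heavy lifting — the hyperbolic scaling, the Chapman--Enskog-type expansion $f=F_{n,u}+\varepsilon f_1$, the closure by $L^2$-minimization producing $a_i$ in \eqref{clo1}, the explicit computations of $L^0$, $L_i^1$ and $l_i$ with the chosen kernels, and the identification of the pressure tensor via \eqref{press11} — has already been carried out to arrive at \eqref{crsys}. All of these steps invoke the assumption \eqref{hypo} on $H$ and $G_i$ to justify replacing $H(f,g)$ by $H(F_{n,u},a)$ and $G_i(f,g)$ by $G_i(F_{n,u},a)$ up to $O(\varepsilon)$; since these hypotheses are precisely the ones re-stated in the theorem, nothing new is required there.

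The only remaining step is to specialize the chemotactic term in the second equation. In \eqref{crsys} it appears as $\mu_2 \sum_{i=1}^{m} n\,\alpha(N_i)$, coming from \eqref{tu1} in which $\alpha$ is evaluated at $\langle g_i\rangle$; after the closure this mean is identically $N_i$ because $\langle a_i\rangle = N_i$ by construction of \eqref{clo1}. Plugging in the hypothesis $\alpha(N_i)=\alpha_i(N_i)\nabla_x N_i$ gives
\begin{equation*}
\mu_2 \sum_{i=1}^{m} n\,\alpha(N_i) \;=\; \mu_2 \sum_{i=1}^{m} n\,\alpha_i(N_i)\,\nabla_x N_i,
\end{equation*}
which is exactly the contribution appearing in the second equation of \eqref{cr1sys}. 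The other three equations of \eqref{cr1sys} coincide with those of \eqref{crsys}, with the harmless notational precision that the relaxation constant in the last equation is indexed by $i$ (as $\sigma_i$), matching the index on the kernel $K_i$ introduced in \eqref{n2}.

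I do not foresee any substantive obstacle: once \eqref{crsys} is in hand and \eqref{hypo} is assumed, the result is obtained by inserting the prescribed form of $\alpha$ into the momentum equation. The only point worth double-checking is the consistency of substituting $\alpha(\langle g_i\rangle)$ by $\alpha_i(N_i)\nabla_x N_i$ in the closed system; this is immediate from $\langle a_i\rangle=N_i$, so no additional estimates are produced and the $O(\varepsilon)$ remainders already present in \eqref{crsys} are preserved unchanged in \eqref{cr1sys}.
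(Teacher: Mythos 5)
Your proposal is correct and follows exactly the route the paper itself takes: the theorem is stated immediately after the derivation of \eqref{crsys}, with no separate proof beyond substituting $\alpha(N_i)=\alpha_i(N_i)\nabla_x N_i$ into the momentum equation (the identification $\langle g_i\rangle=N_i$ already being built into \eqref{hyd2} and preserved by the closure $\langle a_i\rangle=N_i$). Your observation about the index on $\sigma_i$ is likewise consistent with the paper's notation in \eqref{n2}, so nothing is missing.
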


This theorem leads to some specific models which are presented in the next subsection.
\subsection{A Cattaneo type model for chemosensitive movement}
Taking $m=1$ in \eqref{cr1sys}, one can derive the corresponding hyperbolic
system for chemosensitive movement, at first order with respect to $\varepsilon$, as follows 
\begin{equation}\label{syst1.}
\begin{cases}
\vspace{0.25cm}
\displaystyle
\partial_t n+div_x(nu)=\Psi(F_{n,u},a_1),\\
\vspace{0.25cm}
\displaystyle
\partial_t(nu)+\frac{s^2}{d}\nabla_{x} n=-\mu_1 nu+ \mu_2  n \alpha_1(N_1)\nabla_{x} N_1+\widetilde{\Psi}(F_{n,u},a_1),\\
\vspace{0.25cm}
\displaystyle
\tau_{1} \partial_t N_1+ div_x(N_1U_1)=\Phi_{1}(F_{n,u},a_1),\\
\displaystyle
\tau_{1} \partial_t(N_1U_1)+\frac{s^2}{d}\nabla_{x} N_1 = -\sigma_1 N_1U_1+\widetilde{\Phi}_{1}(F_{n,u},a_1),
\end{cases}
\end{equation}
where $$\Psi(F_{n,u},a_1):=\int_{V}H(F_{n,u},a_1)dv,\,\  \widetilde{\Psi}(F_{n,u},a_1):=\int_{V}v H(F_{n,u},a_1)dv$$ and
$$\Phi_{1}(F_{n,u},a_1):=\int_{V} G_1(F_{n,u},a_1)dv,\,\  \widetilde{\Phi}_{1}(F_{n,u},a_1):=\int_{V} v G_1(F_{n,u},a_1)dv.$$

In absence of interactions, the authors in \cite{filbert1} and \cite{hillen4} derived, respectively, the first two equations
for $(n,nu)$ by asymptotic analysis and moment closure. The system composed by the first two equations with $H=0$ is called the Cattaneo model for chemosensitive movement with density control \cite{dolak,hillen2}.

\subsection{Derivation of Keller-Segel models} \label{sec 4.2}
The approach proposed can be applied to derive a variety of models of Keller-Segel type. Indeed, by taking the system \eqref{cr1sys} and with different scalings this approach allows to derive various models.\\
From \eqref{clo1} and \eqref{clo2}, we have $\displaystyle{F_{n,u}=\frac{1}{|V|}(n+\frac{d}{s^2}nu\cdot v)}$ and $a_1=\frac{1}{|V|}(N_1+\frac{d}{s^2}N_1U_1\textcolor{blue}{\cdot}v)$. To get our aim, we assume moreover in this subsection the following assumption,
\begin{equation}\label{assu1}
H(F_{n,u},a_1)=H\left(\frac{n}{|V|},\frac{N_1}{|V|}\right)+O(\frac{1}{s^2})\,\,\text{and}\,\,\ G_1(F_{n,u},a_1)=G_1\left(\frac{n}{|V|},\frac{N_1}{|V|}\right)+O(\frac{1}{s^2}),
\end{equation}
and we set
\begin{equation*}
\widetilde{H}(n,N_1)=|V|H\left(\frac{n}{|V|},\frac{N_1}{|V|}\right) \quad \text{and} \quad \widetilde{G_1}(n,N_1)=|V|G_1\left(\frac{n}{|V|},\frac{N_1}{|V|}\right).
\end{equation*}
Consequently, we have the following proposition,
\begin{proposition}
For $m=1$, the system \eqref{cr1sys} becomes, with above assumptions \eqref{hypo} and \eqref{assu1}, which are satisfied if $H$ and $G_1$ are bilinear
\begin{equation}\label{systm11}
\begin{cases}
\vspace{0.25cm}
\displaystyle
\partial_t n+div_x(nu)= \widetilde{H}(n,N_1)+O(\frac{1}{s^2}),\\
\vspace{0.25cm}
\displaystyle
\partial_t(nu)+\frac{s^2}{d}\nabla_{x} n=-\mu_1  nu+ \mu_{2} n \alpha_1(N_1)\nabla_{x} N_{_1}+ O(\frac{1}{s^2}),\\
\vspace{0.25cm}
\displaystyle
\tau_{1} \partial_t N_{1} + div_x(N_{1}U_1)=  \widetilde{G_1}(n,N_1)+O(\frac{1}{s^2}),\\
\displaystyle
\tau_{1} \partial_t (N_{1}U_1)+\frac{s^2}{d}\nabla_{x} N_{_1} = -\sigma_1 N_1U_1+O(\frac{1}{s^2}).
\end{cases}
\end{equation}
\end{proposition}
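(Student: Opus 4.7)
The plan is direct substitution: insert the expansions \eqref{assu1} into the four integral source terms on the right-hand side of \eqref{cr1sys} (specialised to $m=1$, so $a=a_1$, $\mu_1 m=\mu_1$, and the sum $\sum_{i=1}^m$ collapses to a single term), and use Lemma \ref{lem} to exploit the radial symmetry of $V=s\mathbb{S}^{d-1}$. The only real work is therefore computing the four integrals $\int_V H\,dv$, $\int_V vH\,dv$, $\int_V G_1\,dv$, and $\int_V vG_1\,dv$ up to $O(1/s^2)$.

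For the mass-conservation equations I would substitute \eqref{assu1} and integrate the constant-in-$v$ leading term:
\begin{equation*}
\int_V H(F_{n,u},a_1)\,dv = |V|\,H\!\bigl(\tfrac{n}{|V|},\tfrac{N_1}{|V|}\bigr)+O\!\bigl(\tfrac{1}{s^2}\bigr)=\widetilde{H}(n,N_1)+O\!\bigl(\tfrac{1}{s^2}\bigr),
\end{equation*}
with the identical computation for $G_1$. For the momentum equations the leading term in \eqref{assu1} is independent of $v$, so by Lemma \ref{lem} the identity $\int_V v\,dv=0$ annihilates it, leaving
\begin{equation*}
\int_V v\,H(F_{n,u},a_1)\,dv=H\!\bigl(\tfrac{n}{|V|},\tfrac{N_1}{|V|}\bigr)\int_V v\,dv+O\!\bigl(\tfrac{1}{s^2}\bigr)=O\!\bigl(\tfrac{1}{s^2}\bigr),
\end{equation*}
and likewise for $G_1$. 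Plugging these four identities into \eqref{cr1sys} recovers the four equations of \eqref{systm11}.

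To justify the parenthetical statement, I would verify \eqref{hypo} and \eqref{assu1} under the bilinearity assumption. Bilinearity immediately gives \eqref{hypo}, since $H(\phi+\varepsilon\psi,\theta)=H(\phi,\theta)+\varepsilon H(\psi,\theta)$ is already of the required form, and likewise for $G_1$. For \eqref{assu1}, I would insert $F_{n,u}=n/|V|+(d/s^2)\,nu\cdot v/|V|$ and $a_1=N_1/|V|+(d/s^2)\,N_1U_1\cdot v/|V|$ and expand by bilinearity; besides the $v$-independent term $H(n/|V|,N_1/|V|)$, the expansion produces three correction terms each carrying an explicit prefactor $d/s^2$ or $d^2/s^4$, which is exactly the content of \eqref{assu1}. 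The main subtlety, and the only point where care is needed, is tracking how these $1/s^2$ prefactors interact with integration against powers of $v$ (whose norm equals $s$ on $V$): this is clean for $\int_V H\,dv$ because the cross-terms have a linear $v$-dependence and vanish via $\int_V v\,dv=0$, and controlling the analogous contribution to $\int_V vH\,dv$ — where one uses $\int_V v\otimes v\,dv=(|V|s^2/d)\mathbb{I}_d$ and the odd-moment identity $\int_V v^iv^jv^k\,dv=0$ from Lemma \ref{lem} — is the one piece of bookkeeping I would do carefully to confirm that the residuals indeed collect into the claimed $O(1/s^2)$ terms.
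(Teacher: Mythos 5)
Your approach---direct substitution of \eqref{assu1} into the right-hand sides of \eqref{cr1sys} with $m=1$, and annihilation of the $v$-weighted leading terms via $\int_V v\,dv=0$ from Lemma \ref{lem}---is exactly the argument the paper intends; the paper states the proposition with no explicit proof beyond ``consequently,'' so your write-up is if anything more detailed. Your treatment of the two mass equations and of the observation that bilinearity yields \eqref{hypo} is fine.

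The one step you defer, however, is precisely the one that does not close as stated. Take $H(\phi,\psi)=c\,\phi\psi$ bilinear. Expanding $H(F_{n,u},a_1)$ gives the constant term $c\,nN_1/|V|^2$, two cross terms $\frac{cd}{s^2|V|^2}\bigl(n\,N_1U_1\cdot v+N_1\,nu\cdot v\bigr)$, and a quartic term. The cross terms are linear in $v$, and $\int_V v\,(b\cdot v)\,dv=\frac{|V|s^2}{d}\,b$, so their contribution to $\int_V vH\,dv$ is $\frac{c}{|V|}\bigl(n\,N_1U_1+N_1\,nu\bigr)$: the $s^2$ from the second-moment integral exactly cancels the $1/s^2$ prefactor, leaving an $O(1)$ quantity, not $O(\frac{1}{s^2})$. (The quartic term is harmless: against $v$ it is an odd third moment and vanishes by Lemma \ref{lem}; against $1$ it is genuinely $O(\frac{1}{s^2})$, which is why the mass equations are unproblematic.) Thus the pointwise form of \eqref{assu1} does not by itself imply $\int_V vH(F_{n,u},a_1)\,dv=O(\frac{1}{s^2})$; to reach \eqref{systm11} one must either read the error in \eqref{assu1} in an integrated sense (tested against both $1$ and $v$), or note that the surviving contributions are proportional to the fluxes $nu$ and $N_1U_1$ and argue that they are lower order in the regime of interest. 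The paper glosses over this point as well, but since you explicitly flagged this bookkeeping as the step requiring care, you should carry it out and state which of these repairs you adopt.
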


Let now $ \sigma_1 \rightarrow\infty$ and $ s \rightarrow\infty $ such that $\frac{s^2}{d \sigma_1}\rightarrow D_{N_{1}}$. Dividing the fourth equation of system \eqref{systm11} by $\sigma_1$ and taking last limits, yields
$
D_{N_{1}}\nabla_{x} N_{_1}=-N_1U_1,
$
therefore the third equation of \eqref{systm11} writes
\begin{equation}\label{sub1}
\tau_{1} \frac{\partial N_{1} }{\partial t}-D_{N_{1}}\Delta_{x} N_{_1}= \widetilde{G_1}(n,N_1).
\end{equation}
Thus, we get the following system
\begin{equation}\label{sysm11}
\begin{cases}
\vspace{0.25cm}
\displaystyle
\partial_t n+div_x(nu)=\widetilde{H}(n,N_1)+O(\frac{1}{s^2}),\\
\vspace{0.25cm}
\displaystyle
\partial_t(nu)+\frac{s^2}{d}\nabla_{x} n=-\mu_1  nu+ \mu_{2} n \alpha_1(N_1)\nabla_{x} N_{_1}+ O(\frac{1}{s^2}),\\
\displaystyle
\tau_{1} \partial_t N_{1} -D_{N_{1}}\Delta_{x} N_{_1}= \widetilde{G_1}(n,N_1) + O(\frac{1}{s^2}).
\end{cases}
\end{equation}

Consequently, if we take: $\tau_1=1,\,\ \alpha_1(N_1)=1$ and $H=O(\frac{1}{s^2}) =0$  and we define
$$\widetilde{G_1}(n,N_1)=g(n,N_1),$$
then we recover the system $(16)$ in \cite{filbert1}.\\

In addition we apply an other scaling for the two first equations of \eqref{sysm11} we can derive some K-S type models. Indeed, we take $\mu1=\mu2$ and $s \rightarrow \infty $ such that
\begin{equation}\label{limit}
 \frac{s^2}{d \mu_1}\rightarrow D_{n}.
\end{equation}

Next, dividing the second equation in \eqref{sysm11} by $\mu_1$ and taking last limits, yields
$$
D_{n}\nabla_{x} n= - nu+\alpha_1(N_1) n \nabla_{x} N_1,
$$
then,
$$
nu =\alpha_1(N_1) n \nabla_{x} N_1- D_{n}\nabla_{x} n,
$$
replacing in the first equation  of system \eqref{sysm11}, with $S:=N_1, \chi(S):=\alpha_1(S)$, yields
\begin{equation}\label{sysm111}
\begin{cases}
\vspace{0.25cm}
\displaystyle
\partial_t n=div_x (D_{n}\nabla_{x} n- n \chi(S)\nabla_{x} S)+\widetilde{H}(n,S),\\
\displaystyle
\tau_{1} \partial_t S=D_{S}\Delta_{x} S+\widetilde{G_1}(n,S).
\end{cases}
\end{equation}
System \eqref{sysm111} consists of two coupled reaction-diffusion equations, which are parabolic equations. Moreover, this model is one of the simplest models to describe the aggregation of cells by chemotaxis.

\section{Numerical methods}
Now, we present some numerical tests in the hyperbolic model \eqref{cr1sys} with the choice $m=1$, $H=0$, $G_1=\frac{n}{|V|}$, and $\tau_1=1$:
\begin{equation}\label{mod.n}
\begin{cases}
\vspace{0.25cm}
\displaystyle
\partial_tn+div_x(nu)=0,\\
\vspace{0.25cm}
\displaystyle
\partial_t(nu)+\frac{s^2}{d}\nabla_{x} n=-\mu_1 nu+ \mu_2  n \alpha_1(N_1)\nabla_{x} N_1,\\
\vspace{0.25cm}
\displaystyle
\partial_tN_1 + div_x(N_1U_1)=n,\\
\displaystyle
\partial_t(N_1U_1)+\frac{s^2}{d}\nabla_{x} N_1 = -\sigma_1 N_1U_1.\\
\end{cases}
\end{equation}
To compute numerical solutions of \eqref{mod.n} in one space dimension we use a well-balanced scheme adapting the method developed by Gosse and Toscani \cite{gosse-toscani}. Well-balanced schemes have been developed in order to guarantee good behaviour of numerical solutions for large time \cite{Laurent-Gosse}. Moreover, we show that the resulting scheme is asymptotic preserving for the limit in \eqref{limit}, in the sense that it is asymptotically equivalent to a well-balanced numerical scheme for the Keller-Segel model. The two-dimensional case referring to \cite{filbert1}, where the numerical method is based on time splitting scheme between the conservative part and the source term of system \eqref{mod.n} where the conservative equation is approximated by the Lax-Friedrichs scheme \cite{Elena-Vazquez,LeVeque}.

\subsection{One dimensional well-balanced and asymptotic-preserving scheme}
In this section we present a well-balanced discretization of system \eqref{mod.n} in one-dimensional setting subject to the scaling of Section \ref{sec 4.2}.
The scheme obtained is asymptotic preserving in the sense that when \eqref{limit}
holds, the limiting scheme is asymptotically equivalent to the well-known Scharfetter-Gummel scheme for the Keller-Segel equations \eqref{sysm111}.

Let us first give an other presentation of system \eqref{mod.n}. We are in the setting of  Section \ref{sec 4.2}, so we set
\begin{equation}\label{n.5.1}
\mu_1=\mu_2=\frac{s^2}{D_n}, \quad \text{and} \quad \sigma_1=\frac{s^2}{D_{N_1}}.
\end{equation}
System \eqref{mod.n} in one dimension, replacing $\mu_1$, $\mu_2$ and $\sigma_1$ by their expressions in \eqref{n.5.1}, yields
\begin{equation}\label{n.5.2}
\begin{cases}
\vspace{0.25cm}
\displaystyle
\partial_t n+ \partial_x(nu)=0,\\
\vspace{0.25cm}
\displaystyle
\varepsilon^2 \partial_t(nu)+ \partial_x n=an-\frac{nu}{D_n},\\
\vspace{0.25cm}
\displaystyle
\partial_t N_{1} + \partial_x(N_{1}U_1)=n,\\
\displaystyle
\varepsilon^2 \partial_t(N_{1}U_1)+\partial_xN_{1}= -\frac{N_1U_1}{D_{N_1}},
\end{cases}
\end{equation}
\vspace{0.25cm}
with $\varepsilon = \frac{1}{s}$ and  $a=\frac{\alpha_1}{D_n}\partial_xN_{1}$.\\
Following the ideas of \cite{gosse-toscani}, we write \eqref{n.5.2} as
\begin{equation}\label{n.5.3}
\begin{cases}
\vspace{0.25cm}
\displaystyle
\partial_tv+\frac{1}{\varepsilon} \partial_x w=\frac{1}{2\varepsilon}\big[ (a-\frac{1}{\varepsilon D_n})v + (a+\frac{1}{\varepsilon D_n})w \big],\\
\vspace{0.25cm}
\displaystyle
\partial_t w - \frac{1}{\varepsilon}\partial_xv=-\frac{1}{2\varepsilon}\big[ (a-\frac{1}{\varepsilon D_n})v + (a+\frac{1}{\varepsilon D_n})w \big],\\
\vspace{0.25cm}
\displaystyle
\partial_tV+\frac{1}{\varepsilon} \partial_x W=-\frac{1}{2\varepsilon^2D_{N_1}}(V-W)+\frac{n}{2},\\
\displaystyle
\partial_tW-\frac{1}{\varepsilon} \partial_xV= \frac{1}{2\varepsilon^2D_{N_1}}(V-W)+ \frac{n}{2},
\end{cases}
\end{equation}
where
\begin{equation}\label{n.5.4}
v=\frac{1}{2}(n+\varepsilon (nu)), \quad V=\frac{1}{2}(N_1+\varepsilon (N_1U_1)),
\end{equation}
\begin{equation}\label{n.5.5}
w=\frac{1}{2}(n-\varepsilon (nu)), \quad  W=\frac{1}{2}(N_1-\varepsilon (N_1U_1)).
\vspace{0.25cm}
\end{equation}

We are now ready to deduce a numerical discretization of system \eqref{mod.n} based in the representation \eqref{n.5.3}.
We discretize $[0,T]\times [-L,L]$, $T,L>0$, by a uniform Cartesian computational grid determined by $\Delta x$ and $\Delta t$, standing for the space and time steps respectively. Let $x_i$ and $t^k$ such that $x_i=-L+i\Delta x$ and $t^k=k\Delta t$, $i=0,\cdots,N_x$, $k\in \mathbb{N}$. The approximations of $v(x,t)$, $w(x,t)$, $V(x,t)$ and $W(x,t)$ at the spatial point $x_i$ and at the time step $t^k$ are denoted by $v_i^k \approx v(t_k,x_i)$, $w_i^k \approx w(t_k,x_i)$, $V_i^k \approx V(t_k,x_i)$ and $W_i^k \approx W(t_k,x_i)$ respectively. We will recover approximations of $n(x,t)$, $nu(x,t)$, $N_1(x,t)$ and $N_1 U_1(x,t)$ by setting $\displaystyle{n_i^k = v_i^k+w_i^k}$, $\displaystyle{(nu)_i^k=\frac 1\varepsilon (v_i^k-w_i^k)}$, \vspace{0.25cm} $\displaystyle{N_1{}_i^k = V_i^k+W_i^k}$, $\displaystyle{(N_1U_1){}_i^k = \frac 1\varepsilon (V_i^k-W_i^k)}$.

Following the ideas in Gosse-Toscani \cite{gosse-toscani}, we discretize \eqref{n.5.3} by
\begin{equation}\label{n.5.6}
\begin{cases}
\displaystyle
\vspace{0.2cm}
v^{k+1}_i=v^k_i - \frac{\Delta t}{\varepsilon \Delta x}(v^{k+1}_i-v^{k+\frac{1}{2}}_{i-\frac{1}{2}}),\\
\displaystyle
\vspace{0.2cm}
w^{k+1}_{i-1}=w^k_{i-1} - \frac{\Delta t}{\varepsilon \Delta x}(w^{k+1}_{i-1}-w^{k+\frac{1}{2}}_{i-\frac{1}{2}}),\\
\displaystyle
\vspace{0.2cm}
V^{k+1}_i=V^k_i - \frac{\Delta t}{\varepsilon \Delta x}(V^{k+1}_i-V^{k+\frac{1}{2}}_{i-\frac{1}{2}}) + \frac{\Delta t}{2} n^{k+1}_i,\\
\displaystyle
W^{k+1}_{i-1}=W^k_{i-1} - \frac{\Delta t}{\varepsilon \Delta x}(W^{k+1}_{i-1}-W^{k+\frac{1}{2}}_{i-\frac{1}{2}})+\frac{\Delta t}{2} n^{k+1}_i,
\end{cases}
\end{equation}
with $i=0,\cdots,N_x$. In order to update the values $v^k_i$, $w^k_{i-1}$, $V^k_i$, $W^k_{i-1}$, we need expressions for the numerical flux $v_{i-\frac{1}{2}}$, $w_{i-\frac{1}{2}}$, $V_{i-\frac{1}{2}}$ and $W_{i-\frac{1}{2}}$. For that purpose we solve in $[x_{i-1},x_i]$, the stationary problem composed of the four equations of \eqref{n.5.3}
\begin{equation*}\label{n.5.7}
\begin{cases}
\displaystyle
\vspace{0.2cm}
\partial_x \bar{v}=\frac{1}{2}\Big[ (a_{i-\frac{1}{2}}-\frac{1}{\varepsilon D_n})\bar{v}  + (a_{i-\frac{1}{2}}+\frac{1}{\varepsilon D_n})\bar{w} \Big],\\
\displaystyle
\vspace{0.2cm}
\partial_x\bar{w}=\frac{1}{2}\Big[ (a_{i-\frac{1}{2}}-\frac{1}{\varepsilon D_n})\bar{v}  + (a_{i-\frac{1}{2}}+\frac{1}{\varepsilon D_n})\bar{w} \Big],\\
\displaystyle
\vspace{0.2cm}
\partial_x \bar{V}=-\frac{1}{2\varepsilon D_{N_1}}(\bar{V}-\bar{W}),\\
\displaystyle
\partial_x\bar{W}=-\frac{1}{2\varepsilon D_{N_1}}(\bar{V}-\bar{W}),\\
\end{cases}
\end{equation*}
where, \vspace{0.25cm}$a_{i-\frac{1}{2}}=\frac{\alpha_1}{D_n}\frac{N_{1,i}-N_{1,i-1}}{\Delta x}$, $i=0,\cdots,N_x$. \\
We complete this system with the incoming boundary conditions
\begin{equation*}
\bar{v}(x_{i-1})=v_{i-1}, \;\; \bar{V}(x_{i-1})=V_{i-1}, \;\;
\bar{w}(x_i)=w_i, \;\;  \bar{W}(x_i)=W_i,
\end{equation*}
and we look for the unknowns:
\begin{equation*}
v_{i-\frac{1}{2}}=\bar{v}(x_i), \;\; V_{i-\frac{1}{2}}=\bar{V}(x_i), \;\; w_{i-\frac{1}{2}}=\bar{w}(x_{i-1}),\;\; W_{i-\frac{1}{2}}=\bar{W}(x_{i-1}).
\end{equation*}
One can solve explicitely this system of differential equations.
After straightforward but tedious computations, one finds
\begin{equation}\label{n.5.8.1}
v_{i-\frac{1}{2}}=\;w_i + f_{i-\frac{1}{2}}, \quad \;\;\;V_{i-\frac{1}{2}}=\;W_i + F_{i-\frac{1}{2}}, \quad i=0,\cdots,N_x
\end{equation}
\begin{equation}\label{n.5.8.2}
w_{i-\frac{1}{2}}=\;v_{i-1} - f_{i-\frac{1}{2}}, \quad W_{i-\frac{1}{2}}=\;W_{i-1} - F_{i-\frac{1}{2}}, \quad i=0,\cdots,N_x,
\end{equation}
where
\begin{equation*}
f_{i-\frac{1}{2}} = \frac{2\varepsilon a_{i-\frac{1}{2}}D_n \big(v_{i-1}-e^{-a_{i-\frac{1}{2}}\Delta x}w_{i}\big)}{\varepsilon a_{i-\frac{1}{2}}(1+e^{-a_{i-\frac{1}{2}}\Delta x}) - (e^{-a_{i-\frac{1}{2}}\Delta x}-1)},
\end{equation*}
\begin{equation*}
\text{and} \quad \quad F_{i-\frac{1}{2}} = \frac{2\varepsilon D_{N_1}}{2\varepsilon D_{N_1} + \Delta x}(V_{i-1}-W_{i}). \hspace{1.5cm}
\vspace{0.25cm}
\end{equation*}
Now the approximations of the numerical fluxes
$v_{i-\frac{1}{2}}^{k+\frac{1}{2}}$, $w_{i-\frac{1}{2}}^{k+\frac{1}{2}}$, $V_{i-\frac{1}{2}}^{k+\frac{1}{2}}$ and $W_{i-\frac{1}{2}}^{k+\frac{1}{2}}$ are computed from \eqref{n.5.8.1}, \eqref{n.5.8.2} as
\begin{equation}\label{n.5.11.1}
v^{k+\frac{1}{2}}_{i-\frac{1}{2}}=\;w_i^{k+1} + f_{i-\frac{1}{2}}^k, \quad V_{i-\frac{1}{2}}^{k+\frac{1}{2}}=\;W_i^{k+1} + F_{i-\frac{1}{2}}^{k+1}, \quad i=0,\cdots,N_x
\end{equation}
\begin{equation}\label{n.5.11.2}
w_{i-\frac{1}{2}}^{k+\frac{1}{2}}=\;v_{i-1}^{k+1} - f_{i-\frac{1}{2}}^k, \quad W_{i-\frac{1}{2}}^{k+\frac{1}{2}}=\;V_{i-1}^{k+1} - F_{i-\frac{1}{2}}^{k+1}, \quad i=0,\cdots,N_x,
\end{equation}
with
\begin{equation}\label{n.5.12.1}
f_{i-\frac{1}{2}}^k = \frac{2\varepsilon a_{i-\frac{1}{2}}^k D_n(v_{i-1}^k-e^{-a_{i-\frac{1}{2}}^k\Delta x}w_i^k)}{\varepsilon a_{i-\frac{1}{2}}^k(1+e^{-a_{i-\frac{1}{2}}^k\Delta x}) - (e^{-a_{i-\frac{1}{2}}^k \Delta x}-1)},
\end{equation}
\begin{equation}\label{n.5.12.2}
F_{i-\frac{1}{2}}^{k+1} = \frac{2\varepsilon D_{N_1}}{2\varepsilon D_{N_1} + \Delta x}(V_{i-1}^{k+1}-W_i^{k+1}), \quad \text{and} \quad
a^k_{i-\frac{1}{2}}=\frac{\alpha_1}{D_n} \frac{N_{1,i}^k-N_{1,i-1}^k}{\Delta x}.
\end{equation}
Since in \eqref{n.5.6} the numerical fluxes are multiplied by a factor of
order $\frac 1\varepsilon$, we use in \eqref{n.5.11.1}-\eqref{n.5.11.2} a semi-implicit
discretization in time where the term $f_{i-\frac 12}$, which is of order
$\varepsilon$, is treated explicitly.
From \eqref{n.5.6}, \eqref{n.5.8.1} and \eqref{n.5.8.2} we obtain, for $i=0,\cdots,N_x$, the following well-balanced scheme of system \eqref{n.5.3}
\begin{equation}\label{n.5.14}
\begin{cases}
\vspace{0.2cm}
\displaystyle
(1+\frac{\Delta t}{\varepsilon \Delta x}) v_i^{k+1} - \frac{\Delta t}{\varepsilon \Delta x} w_i^{k+1}= v_i^k + \frac{\Delta t}{\varepsilon \Delta x} f^k_{i-\frac{1}{2}},\\
\vspace{0.2cm}
\displaystyle
(1+\frac{\Delta t}{\varepsilon \Delta x}) w_i^{k+1} - \frac{\Delta t}{\varepsilon \Delta x} v_i^{k+1}= w_i^k - \frac{\Delta t}{\varepsilon \Delta x} f^k_{i+\frac{1}{2}},\\
\vspace{0.2cm}
\displaystyle
(1+\frac{\Delta t}{\varepsilon \Delta x}) V_i^{k+1} - \frac{\Delta t}{\varepsilon \Delta x} W_i^{k+1}= V_i^k + \frac{\Delta t}{\varepsilon \Delta x} F^{k+1}_{i-\frac{1}{2}} + \frac{\Delta t}{2} n^{k+1}_i,\\
\displaystyle
(1+\frac{\Delta t}{\varepsilon \Delta x}) W_i^{k+1} - \frac{\Delta t}{\varepsilon \Delta x} V_i^{k+1}= W_i^k - \frac{\Delta t}{\varepsilon \Delta x} F^{k+1}_{i+\frac{1}{2}}+\frac{\Delta t}{2} n^{k+1}_i,\\
\end{cases}
\end{equation}
\vspace{0.25cm}
where $f_{i-\frac{1}{2}}^k$ and $F_{i-\frac{1}{2}}^{k+1}$ are given in \eqref{n.5.12.1} and \eqref{n.5.12.2} respectively.\\
The ghost-points, points with index $i=-1$ or $i=N_x+1$, are computed from the boundary conditions where we impose Neumann boundary conditions for the density $n$ and for the concentration $N_1$
\begin{equation}\label{n.6.12}
\frac{\partial n}{\partial \eta} \Big|_{\partial \Omega}=0, \quad \text{and} \quad \frac{\partial N_1}{\partial \eta} \Big|_{\partial \Omega}=0,
\end{equation}
$\eta(x)$ stand for the inward unit normal at $x \in \partial \Omega$. The boundary conditions for the flux $q:=nu$ are the Dirichlet conditions:
\begin{equation}\label{n.6.13}
q|_{\partial \Omega} = 0.
\end{equation}
This yields
\begin{equation}\label{j.1}
n_{-1}^k=n_1^k, \quad n_{N_x+1}^k=n_{N_x-1}^k,
\end{equation}
\begin{equation}
q_{-1}^k=q_1^k, \quad q_{N_x+1}^k=q_{N_x-1}^k.
\end{equation}
Next, we will prove that \eqref{n.5.14} is asymptotic preserving scheme, more precisely we will prove that when $\varepsilon$ is small (i.e $s$ is large) \eqref{n.5.14} is asymptotically equivalent to the Scharfetter-Gummel scheme, discussed in bellow, for the Keller-Segel model.

We first recall Scharfetter-Gummel method \cite{ScharfGum} adapted to the Keller-Segel model.
It has been shown in section \ref{sec 4.2} that problem \eqref{cr1sys} is "asymptotically" equivalent to the following Keller-Segel type model
\begin{equation}\label{n.5.15}
\begin{cases}
\vspace{0.25cm}
\displaystyle
\partial_tn=\partial_x (D_{n}\partial_{x} n- n \chi(S)\partial_{x} S),\\
\displaystyle
\partial_t S=D_{S}\partial_{xx} S,
\end{cases}
\end{equation}
\vspace{0.25cm}
with $S=N_1$ and $\chi(S)=\alpha_1(S)$.\\
We rewrite the first equation of \eqref{n.5.15} as
\begin{equation}\label{n.5.16}
\partial_t n + \partial_x J=0, \quad \text{with} \quad J=-D_{n}\partial_{x} n + n \alpha_1\partial_{x} S.
\end{equation}
In standard notation, the discretization of the equation \eqref{n.5.15} writes
\begin{equation}\label{n.5.17}
\frac{n_i^{k+1} - n_i^k}{\Delta t} + \frac{J_{i+\frac{1}{2}}^k - J_{i-\frac{1}{2}}^k}{\Delta x} =0,
\end{equation}
Here, the flux $J^k_{i+\frac{1}{2}}$ is given by the local boundary-value problem
\begin{equation}\label{n.5.18}
\begin{cases}
\vspace{0.25cm}
\displaystyle
J^k_{i-\frac{1}{2}} = -D_n \partial_x\bar{n} + \alpha_1 \frac{S^k_{i+1}-S^k_i}{\Delta x} \bar{n},\\
\displaystyle
\bar{n}(0)=n^k_i, \; \bar{n}(\Delta x)=n^k_{i+1}.
\end{cases}
\vspace{0.25cm}
\end{equation}
This differential system can be solved explicitely, one gets
\begin{equation}\label{n.5.19}
J^k_{i+\frac{1}{2}} = \alpha_1 \partial_x^{(c)} S^k_i \frac{n^k_i - \exp(-\frac{\alpha_1 \Delta x}{D_n} \partial_x^{(c)} S^k_i)n^k_{i+1}}{1-\exp(-\frac{\alpha_1 \Delta x}{D_n} \partial_x^{(c)} S^k_i)},
\end{equation}
\vspace{0.25cm}
where, $\partial_x^{(c)} S^k_i=\frac{S^k_{i+1} - S^k_i}{\Delta x}$, $i=0,\cdots,N_x$.\\
On the other hand the second equation of system \eqref{n.5.15} is approximated by the classical second order finite difference scheme \cite{LeVeque}
\begin{equation}\label{n.5.20}
\frac{S^{k+1}_i - S^k_i}{\Delta t}= D_S \frac{S^{k+1}_{i-1} - 2S^{k+1}_i + S^{k+1}_{i+1}}{(\Delta x)^2} + n^{k+1}_i,
\vspace{0.25cm}
\end{equation}
\vspace{0.25cm}
with $i=0,\cdots,N_x$. On the boundaries, we again use \eqref{j.1}.\\
The next proposition show that the well-balanced scheme \eqref{n.5.14} is asymptotic preserving scheme.
\begin{proposition}\label{psp}
Formally, when $\varepsilon\to 0$, the numerical discretization \eqref{n.5.14} converges to the discretization \eqref{n.5.17}, \eqref{n.5.19}, \eqref{n.5.20} of Keller-Segel system \eqref{n.5.15} (with $S=N_1$ and $\chi=\alpha_1$).
\end{proposition}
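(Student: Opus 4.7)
The plan is to take the limit $\varepsilon\to 0$ directly at the level of the discrete scheme, after rewriting \eqref{n.5.14} in terms of the macroscopic unknowns $n_i^k=v_i^k+w_i^k$ and $N_{1,i}^k=V_i^k+W_i^k$. In the limit, the variables $v_i^k,w_i^k$ both collapse to $n_i^k/2$ and $V_i^k,W_i^k$ both collapse to $N_{1,i}^k/2$ (because $nu$ and $N_1U_1$ enter through $(v-w)/\varepsilon$ and $(V-W)/\varepsilon$ and must remain bounded). The key observation is that in \eqref{n.5.14} the fluxes appear multiplied by $1/(\varepsilon\Delta x)$, while inspection of \eqref{n.5.12.1}, \eqref{n.5.12.2} shows that $f_{i-1/2}^k$ and $F_{i-1/2}^{k+1}$ both carry a prefactor $\varepsilon$, so the ratios $f_{i-1/2}^k/\varepsilon$ and $F_{i-1/2}^{k+1}/\varepsilon$ have a finite, nontrivial limit that I expect to identify with the Scharfetter--Gummel flux and the standard diffusive flux respectively.

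Concretely, I would first add the first two equations in \eqref{n.5.14} to obtain
\begin{equation*}
\frac{n_i^{k+1}-n_i^k}{\Delta t}+\frac{1}{\varepsilon\Delta x}\bigl(f_{i+\frac12}^k-f_{i-\frac12}^k\bigr)=0,
\end{equation*}
and similarly add the last two to obtain
\begin{equation*}
\frac{N_{1,i}^{k+1}-N_{1,i}^k}{\Delta t}+\frac{1}{\varepsilon\Delta x}\bigl(F_{i+\frac12}^{k+1}-F_{i-\frac12}^{k+1}\bigr)=n_i^{k+1}.
\end{equation*}
These are exactly the conservative forms of \eqref{n.5.17} and \eqref{n.5.20}, provided that $f_{i-1/2}^k/\varepsilon\to J_{i-1/2}^k$ and $F_{i-1/2}^{k+1}/\varepsilon\to D_{N_1}(N_{1,i-1}^{k+1}-N_{1,i}^{k+1})/\Delta x$. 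Since $v_{i-1}^k\to n_{i-1}^k/2$ and $w_i^k\to n_i^k/2$ as $\varepsilon\to 0$, and since the denominator in \eqref{n.5.12.1} tends to $1-e^{-a_{i-1/2}^k\Delta x}$, a direct passage to the limit gives
\begin{equation*}
\frac{f_{i-\frac12}^k}{\varepsilon}\;\longrightarrow\;\frac{a_{i-\frac12}^k D_n\bigl(n_{i-1}^k-e^{-a_{i-\frac12}^k\Delta x}n_i^k\bigr)}{1-e^{-a_{i-\frac12}^k\Delta x}},
\end{equation*}
and with $a_{i-1/2}^k=(\alpha_1/D_n)(N_{1,i}^k-N_{1,i-1}^k)/\Delta x$ this is precisely the Scharfetter--Gummel flux in \eqref{n.5.19} (shifted by one index). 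For the chemoattractant equation, the expansion $\frac{2\varepsilon D_{N_1}}{2\varepsilon D_{N_1}+\Delta x}=\frac{2\varepsilon D_{N_1}}{\Delta x}+O(\varepsilon^2)$ combined with $V_{i-1}^{k+1}-W_i^{k+1}\to(N_{1,i-1}^{k+1}-N_{1,i}^{k+1})/2$ yields $F_{i-1/2}^{k+1}/\varepsilon\to D_{N_1}(N_{1,i-1}^{k+1}-N_{1,i}^{k+1})/\Delta x$, so that the second equation becomes the implicit centered discretization \eqref{n.5.20} of $\partial_t S=D_S\partial_{xx}S+n$.

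The argument is formal in the sense stated by the proposition: I would assume that $v_i^k,w_i^k,V_i^k,W_i^k$ admit bounded expansions in powers of $\varepsilon$ and identify the $\varepsilon^0$ coefficients. The only mildly delicate point I anticipate is the bookkeeping on $f^k_{i-1/2}/\varepsilon$: its numerator is formally $O(\varepsilon)$ but its denominator is also $O(1)+O(\varepsilon)$, so one must check that the leading constant term of the denominator, $1-e^{-a\Delta x}$, does not degenerate, which is immediate since $\Delta x>0$ and $a_{i-1/2}^k$ is finite. The Neumann boundary conditions \eqref{j.1} translate the same way on both sides, so they do not require separate treatment. Once these limit identifications are in place, the proposition follows by term-by-term comparison between the limiting scheme and \eqref{n.5.17}--\eqref{n.5.20}.
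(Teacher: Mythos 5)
Your proposal is correct and follows essentially the same route as the paper's proof: sum the two pairs of equations in \eqref{n.5.14} to obtain conservative updates for $n_i^k$ and $N_{1,i}^k$, then pass to the limit in $f^k_{i-1/2}/\varepsilon$ and $F^{k+1}_{i-1/2}/\varepsilon$ to recover the Scharfetter--Gummel flux \eqref{n.5.19} and the centered diffusive flux of \eqref{n.5.20}. Your limit values agree with the paper's \eqref{n.5.24}--\eqref{n.5.25}, and your explicit remarks on the non-degeneracy of the denominator and the collapse $v,w\to n/2$ only make explicit what the paper leaves implicit.
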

\begin{proof}
By summing the first and second equations of \eqref{n.5.14} and the third and fourth equations, one can drive, for every $i=0,\cdots,N_x$, the two following equations
\begin{equation}\label{n.5.21.1}
n_i^{k+1} =n^k_i + \frac{\Delta t}{\Delta x}\Big( \frac{f^k_{i-\frac{1}{2}}}{\varepsilon} - \frac{f^k_{i+\frac{1}{2}}}{\varepsilon} \Big),
\end{equation}
\begin{equation}\label{n.5.21.11}
S^{k+1}_i =S^k_i + \frac{\Delta t}{\Delta x} \Big(  \frac{F^{k+1}_{i-\frac{1}{2}}}{\varepsilon} - \frac{F^{k+1}_{i+\frac{1}{2}}}{\varepsilon} \Big) + \Delta t n^{k+1}_i.
\vspace{0.25cm}
\end{equation}
But the expressions of $f^k_{i-\frac{1}{2}}$ and $F^{k+1}_{i-\frac{1}{2}}$, in Eqs. \eqref{n.5.12.1} and \eqref{n.5.12.2}, implies
\begin{eqnarray*}\label{n.5.22}
\frac{f_{i-\frac{1}{2}}^k}{\varepsilon} = \frac{2a^k_{i-\frac{1}{2}}D_n}{\varepsilon a^k_{i-\frac{1}{2}} \big( 1+\exp({-a^k_{i-\frac{1}{2}} \Delta x}) \big)- \big( \exp(-a^k_{i-\frac{1}{2}} \Delta x)-1 \big)}\\
\times \Big[ \frac{1}{2}(n^k_{i-1} + \varepsilon (nu)^k_{i-1}) -\frac{\exp(a^k_{i-\frac{1}{2}} \Delta x)}{2}( n^k_i - \varepsilon (nu)_i^k) \Big]
\end{eqnarray*}

\begin{eqnarray*}\label{n.5.23}
\text{and} \quad \frac{F^{k+1}_{i-\frac{1}{2}}}{\varepsilon} = \frac{D_S}{2\varepsilon D_S + \Delta x} \big( S^{k+1}_{i-1} + \varepsilon (SU_1)^{k+1}_{i-1} - S^k_i + \varepsilon (SU_1)^{k+1}_i \big).
\vspace{0.25cm}
\end{eqnarray*}
It follows that for every $i=0,\cdots,N_x$
\begin{equation}\label{n.5.24}
\lim_{\varepsilon \rightarrow 0^+} \frac{f_{i-\frac{1}{2}}^k}{\varepsilon} = \frac{\alpha_1 \partial_x^{(c)}S^k_{i-1}}{1-\exp(-\frac{\alpha_1 \Delta x}{D_n} \partial_x^{(c)}S^k_{i-1})}\big( n^k_{i-1} - \exp(-\frac{\alpha_1 \Delta x}{D_n} \partial_x^{(c)}S^k_{i-1})n^k_i \big),
\end{equation}
\begin{equation}\label{n.5.25}
\text{and}, \quad \lim_{\varepsilon \rightarrow 0^+}F^{k+1}_{i-\frac{1}{2}} = \frac{D_S}{\Delta x}(S^{k+1}_{i-1} - S^{k+1}_i).
\end{equation}
Passing to the limit, in \eqref{n.5.21.1}-\eqref{n.5.21.11}, and using the relations \eqref{n.5.24}-\eqref{n.5.25} yields the discretization \eqref{n.5.17}, \eqref{n.5.19}, \eqref{n.5.20}.
\\
\end{proof}
\subsection{Two dimensional numerical method}
In this section we will solve numerically the model \eqref{mod.n} in the two dimensional case.
Since the extension of the techniques proposed in previous section to higher dimension is still
not complete, we choose a discretization based on the Lax-Friedrichs scheme \cite{Elena-Vazquez,filbert1,LeVeque}.

Following the idea of \cite{filbert1} we write \eqref{mod.n} in the following form
\begin{equation}\label{n.6.1}
\displaystyle
\partial_t U+\partial_x F_1(U)+\partial_y F_2(U)=R(U),
\end{equation}
where
\begin{equation}\label{n.6.2}
\displaystyle
F_1(U)= \left(\;\;\begin{matrix}
nu_1\\
\frac{s^2 n}{2} \\
0 \\
N_1U_1^1 \\
\frac{s^2 N_1}{2} \\
0
\end{matrix}\;\;\right),
\quad
\displaystyle
F_2(U)= \left(\;\;\begin{matrix}
nu_2\\
 0\\
\frac{s^2 n}{2} \\
N_1U_1^2\\
0\\
\frac{s^2 N_1}{2}
\end{matrix}\;\;\right),
\end{equation}
\begin{equation}\label{n.6.3}
\quad
\text{and}
\quad
\displaystyle
R(U)= \left(\;\;\begin{matrix}
0\\
-\mu_1 n u+ \mu_2n\alpha_1(N_1)\nabla N_1\\
n\\
-\sigma_1N_1U_1
\end{matrix}\;\;\right),
\end{equation}
with
\begin{equation}\label{n.6.4}
u=(u_1,u_2),\qquad U_1=(U_1^1,U_1^2)\quad \text{and} \quad U=\left(\begin{matrix}
n\\ nu\\ N_1\\ N_1U_1
\end{matrix}\right).
\vspace{0.25cm}
\end{equation}

We use a Cartesian discretization of the rectangular domain $[-L_x,L_x]\times [-L_y,L_y]$
with steps $\Delta x$ and $\Delta y$.
The nodes of the mesh are denoted $(x_i,y_j)$ with $x_i=-L_x+i\Delta x$, $y_j=-L_y+j\Delta y$,
for $i=0,\ldots,N_x$ and $j=0,\ldots,N_y$. The time step is denoted $\Delta t$ and $t^k=k\Delta t$,
for $k\in\mathbb{N}$.

For each time step the equation \eqref{n.6.1} is solved using a time splitting method where the approximation $U^{k+1}_{i,j}$ is updated from $U^k_{i,j}$ in two steps: first we approximate the solution of equation \eqref{mod.n} without the source term ($R=0$), using the following scheme
\begin{equation}\label{n.6.5}
\displaystyle
U^{k+\frac{1}{2}}_{i,j}=U^k_{i,j}-\frac{\Delta t}{\Delta x} \big( F^{k+\frac{1}{2}}_{1,i+\frac{1}{2},j} - F^{k+\frac{1}{2}}_{1,i-\frac{1}{2},j} \big) - \frac{\Delta t}{\Delta y} \big( F^{k+\frac{1}{2}}_{2,i,j+\frac{1}{2}} - F^{k+\frac{1}{2}}_{2,i,j-\frac{1}{2}} \big),
\end{equation}
where the numerical flux $F^{k+\frac{1}{2}}_{1,i+\frac{1}{2},j}$,  $F^{k+\frac{1}{2}}_{1,i-\frac{1}{2},j}$, $F^{k+\frac{1}{2}}_{2,i,j+\frac{1}{2}},$ and $F^{k+\frac{1}{2}}_{2,i,j-\frac{1}{2}}$ are given by the Lax-Friedrichs flux \cite{Elena-Vazquez}
\begin{equation*}\label{n.6.6}
F^{k+\frac{1}{2}}_{1,i+\frac{1}{2},j} = \frac{1}{2} \big( F_1\big( U^{k+\frac{1}{2}}_{i,j} \big) + F_1 \big( U^{k+\frac{1}{2}}_{i+1,j} \big) \big) - \frac{\alpha_x}{2}\big( U^{k+\frac{1}{2}}_{i+1,j} -U^{k+\frac{1}{2}}_{i,j} \big),
\end{equation*}
\begin{equation*}\label{n.6.7}
F^{k+\frac{1}{2}}_{1,i-\frac{1}{2},j} = \frac{1}{2} \big( F_1\big( U^{k+\frac{1}{2}}_{i-1,j} \big) + F_1 \big( U^{k+\frac{1}{2}}_{i,j} \big) \big) - \frac{\alpha_x}{2}\big( U^{k+\frac{1}{2}}_{i,j} -U^{k+\frac{1}{2}}_{i-1,j} \big),
\end{equation*}
\begin{equation*}\label{n.6.8}
F^{k+\frac{1}{2}}_{2,i,j+\frac{1}{2}} = \frac{1}{2} \big( F_2\big( U^{k+\frac{1}{2}}_{i,j} \big) + F_2 \big( U^{k+\frac{1}{2}}_{i,j+1} \big) \big) - \frac{\alpha_y}{2}\big( U^{k+\frac{1}{2}}_{i,j+1} -U^{k+\frac{1}{2}}_{i,j} \big),
\end{equation*}
\begin{equation*}\label{n.6.9}
F^{k+\frac{1}{2}}_{2,i,j-\frac{1}{2}} = \frac{1}{2} \big( F_2\big( U^{k+\frac{1}{2}}_{i,j-1} \big) + F_2 \big( U^{k+\frac{1}{2}}_{i,j} \big) \big) - \frac{\alpha_y}{2}\big( U^{k+\frac{1}{2}}_{i,j} -U^{k+\frac{1}{2}}_{i,j-1} \big).
\end{equation*}
\begin{figure}
\hspace*{-1.2cm}
\begin{subfigure}{.58\textwidth}
  \centering
  \includegraphics[width=1.\linewidth]{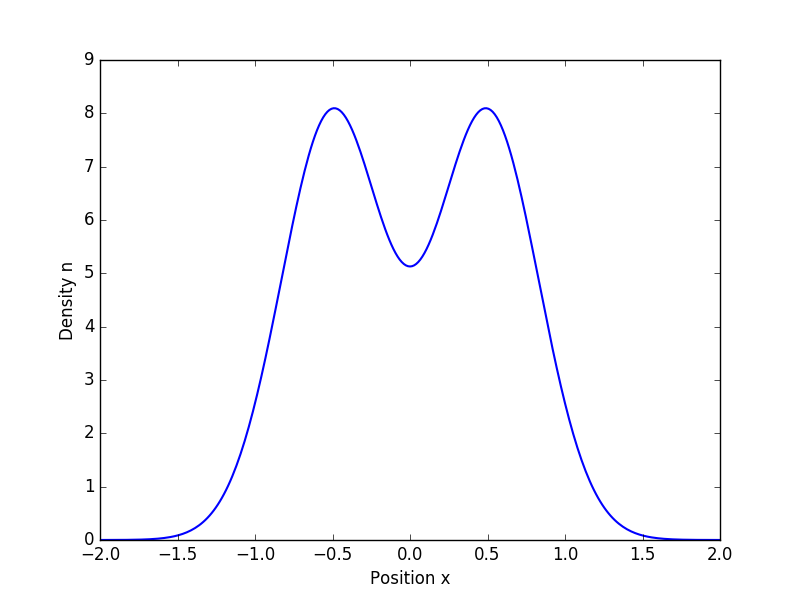}
  \caption{Density at $t=0.01$}
  \label{fig:sfig1}
\end{subfigure}%
\begin{subfigure}{.58\textwidth}
  \centering
  \includegraphics[width=1.\linewidth]{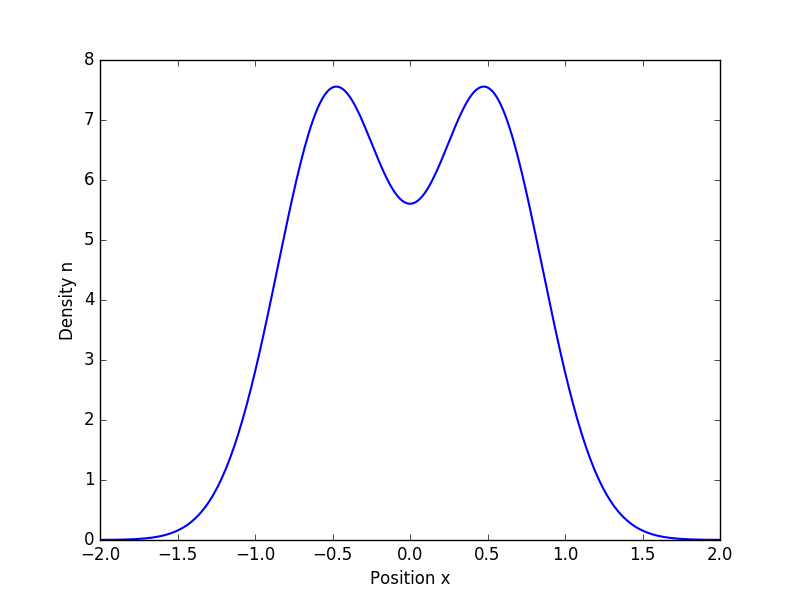}
  \caption{Density at $t=0.02$}
  \label{fig:sfig2}
\end{subfigure}
 \hspace*{-1.2cm}
 \begin{subfigure}{.58\textwidth}
  \centering
  \includegraphics[width=1.\linewidth]{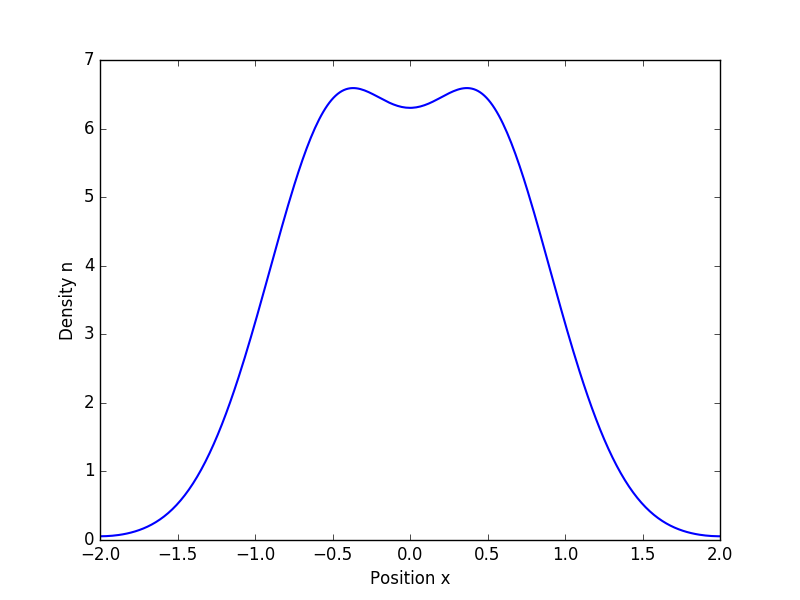}
  \caption{Density at $t=0.06$}
  \label{fig:sfig2}
\end{subfigure}%
\begin{subfigure}{.58\textwidth}
  \centering
  \includegraphics[width=1.\linewidth]{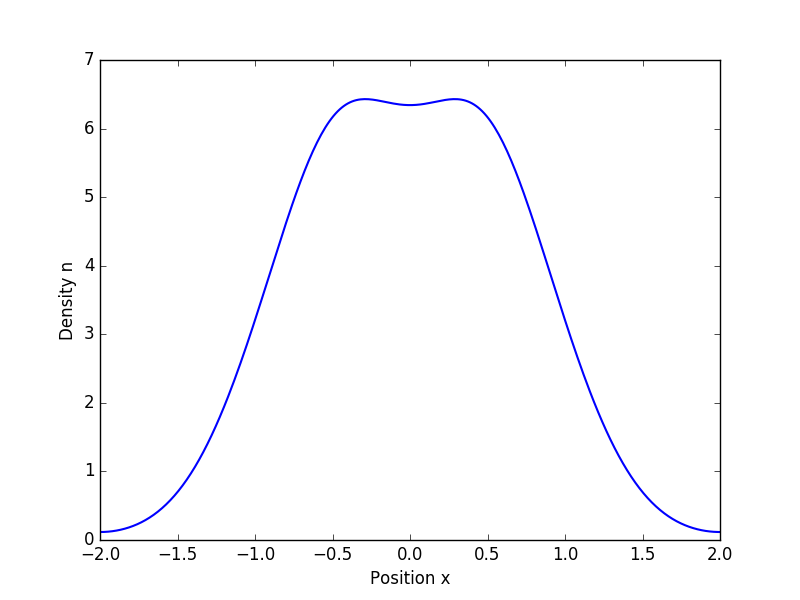}
  \caption{Density at $t=0.08$}
  \label{fig:sfig2}
\end{subfigure}
\caption{Time dynamics of the cell density $n(t,x)$ obtained from the WB scheme with $s=5^9$ on the domain $[-2,2]$. Parameter values: $\alpha_1=0.33$, $D_n=1$, $D_S=0.001$.}
\label{fig:fig}
\end{figure}

\begin{figure}
  \hspace*{-1.2cm}
  \begin{subfigure}{.58\textwidth}
  \centering
  \includegraphics[width=1.\linewidth]{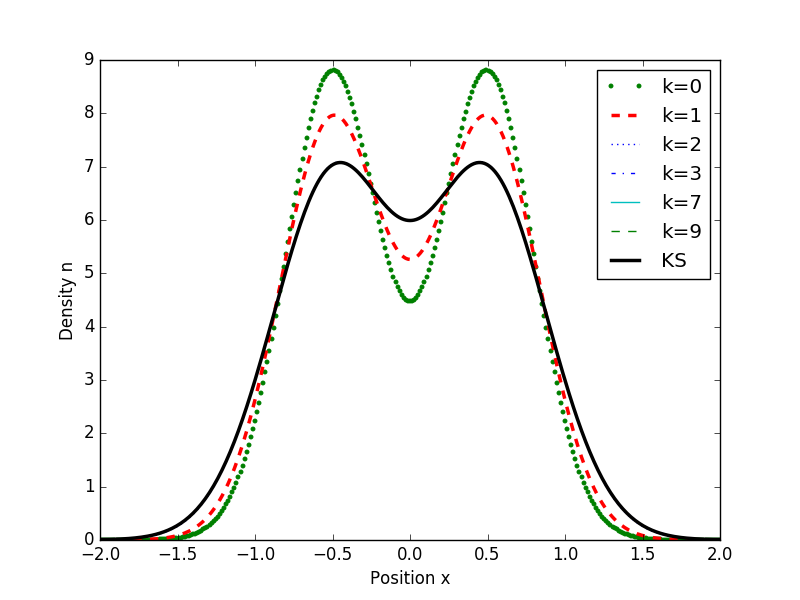}
  \caption{Density at $t=0.03$}
  \label{fig:sfig1}
\end{subfigure}%
\begin{subfigure}{.58\textwidth}
  \centering
  \includegraphics[width=1.\linewidth]{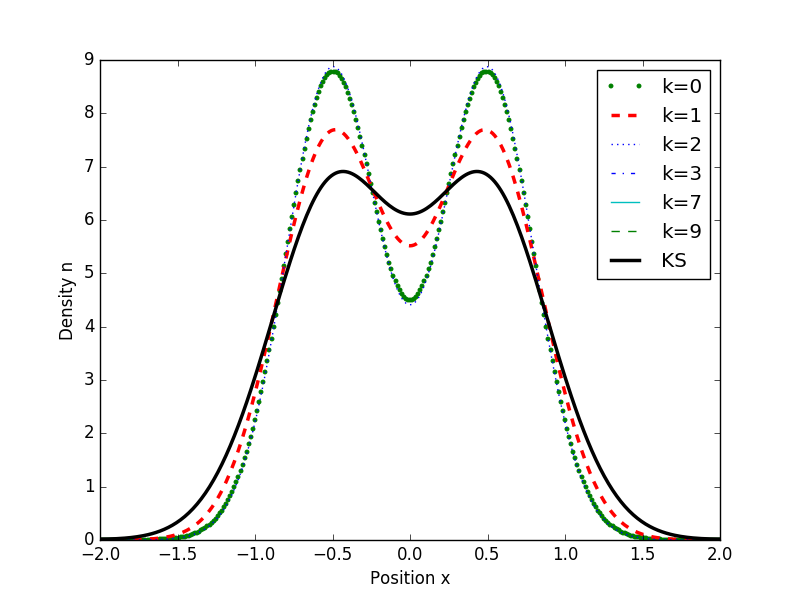}
  \caption{Density at $t=0.04$}
  \label{fig:sfig2}
\end{subfigure}
  \hspace*{-1.2cm}
  \begin{subfigure}{.58\textwidth}
  \centering
  \includegraphics[width=1.\linewidth]{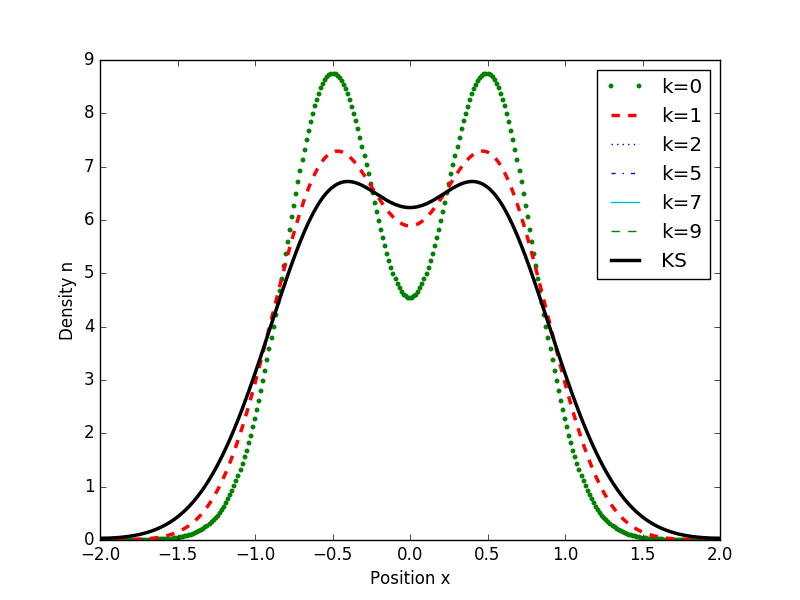}
  \caption{Density at $t=0.05$}
  \label{fig:sfig2}
\end{subfigure}%
\begin{subfigure}{.58\textwidth}
  \centering
  \includegraphics[width=1.\linewidth]{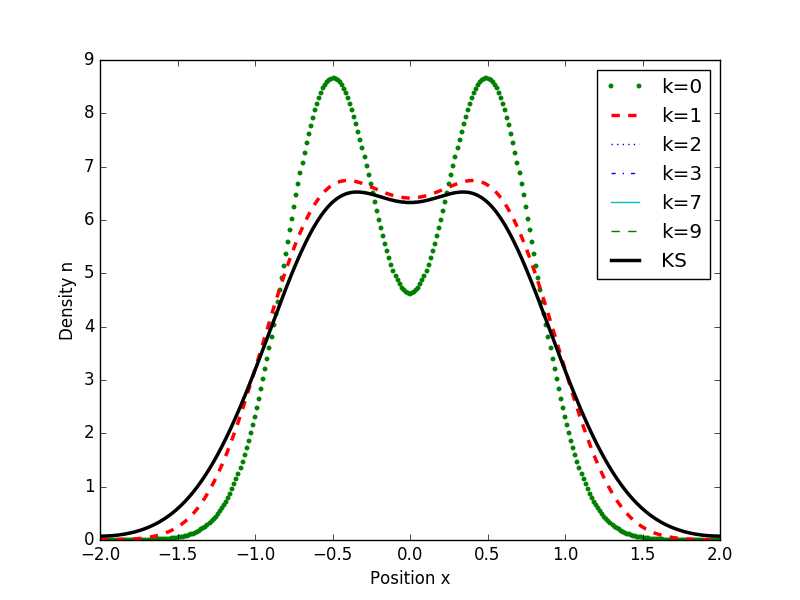}
  \caption{Density at $t=0.07$}
  \label{fig:sfig2}
\end{subfigure}
\caption{Time dynamics of the cell density $n(t,x)$ obtained from the WB scheme with $s=5^k$, $k=0,1,2,5,7,9$ and comparison with KS on the domain $[-2,2]$. Parameter values: $\alpha_1=0.33$, $D_n=1$, $D_S=0.001$.}
\label{fig:fig}
\end{figure}

\begin{figure}
\begin{center}
\hspace*{-1.5cm}
\begin{subfigure}{.6\textwidth}
 \centering
  \includegraphics[width=1.\linewidth]{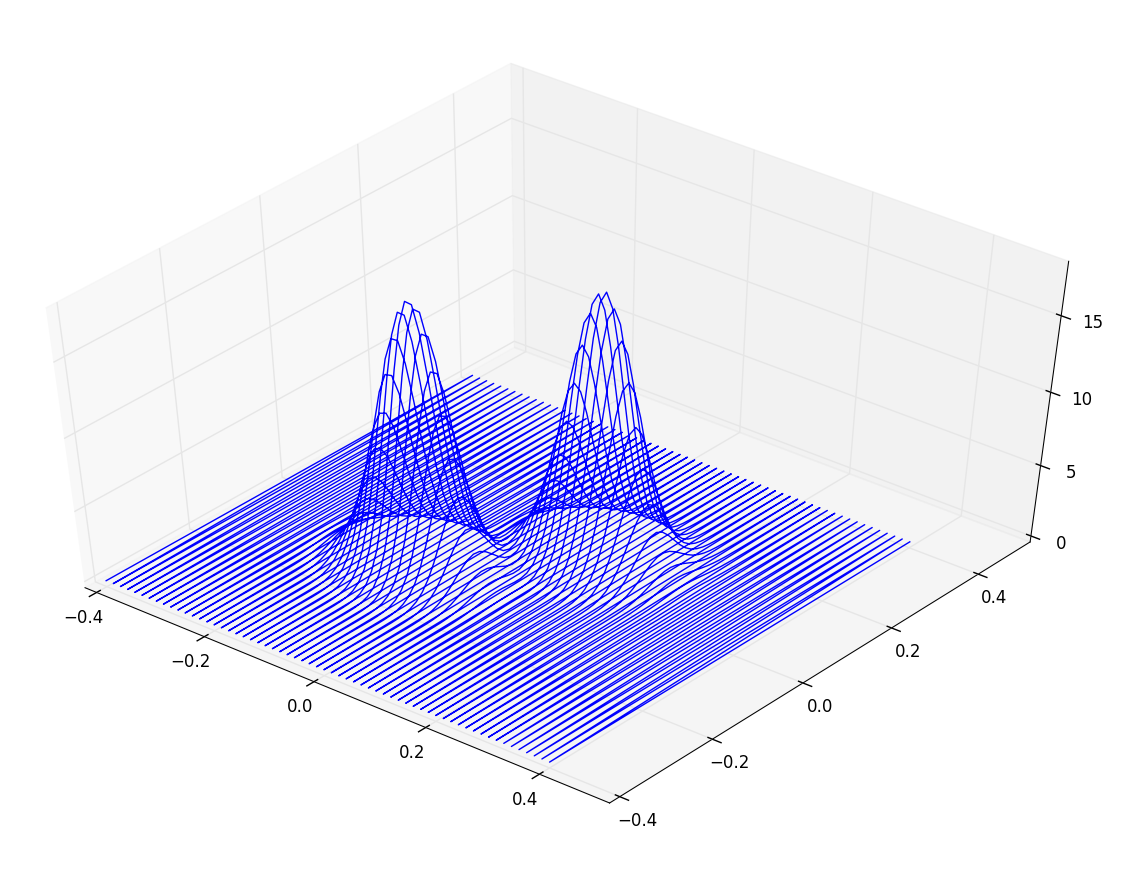}
  \caption{Density at $t=0.001$}
  \label{fig:sfig1}
\end{subfigure}%
\begin{subfigure}{.6\textwidth}
  \centering
  \includegraphics[width=1.\linewidth]{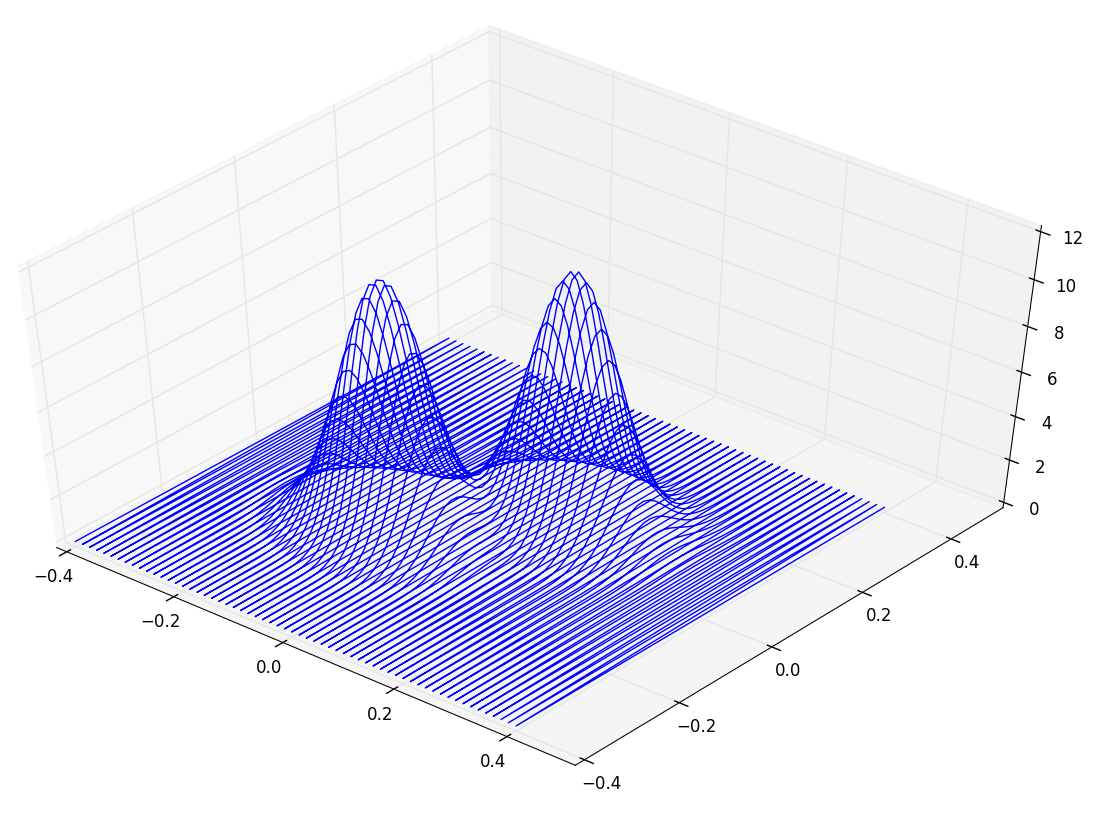}
  \caption{Density at $t=0.002$}
  \label{fig:sfig2}
\end{subfigure}%

\begin{subfigure}{.6\textwidth}
  \centering
  \includegraphics[width=1.\linewidth]{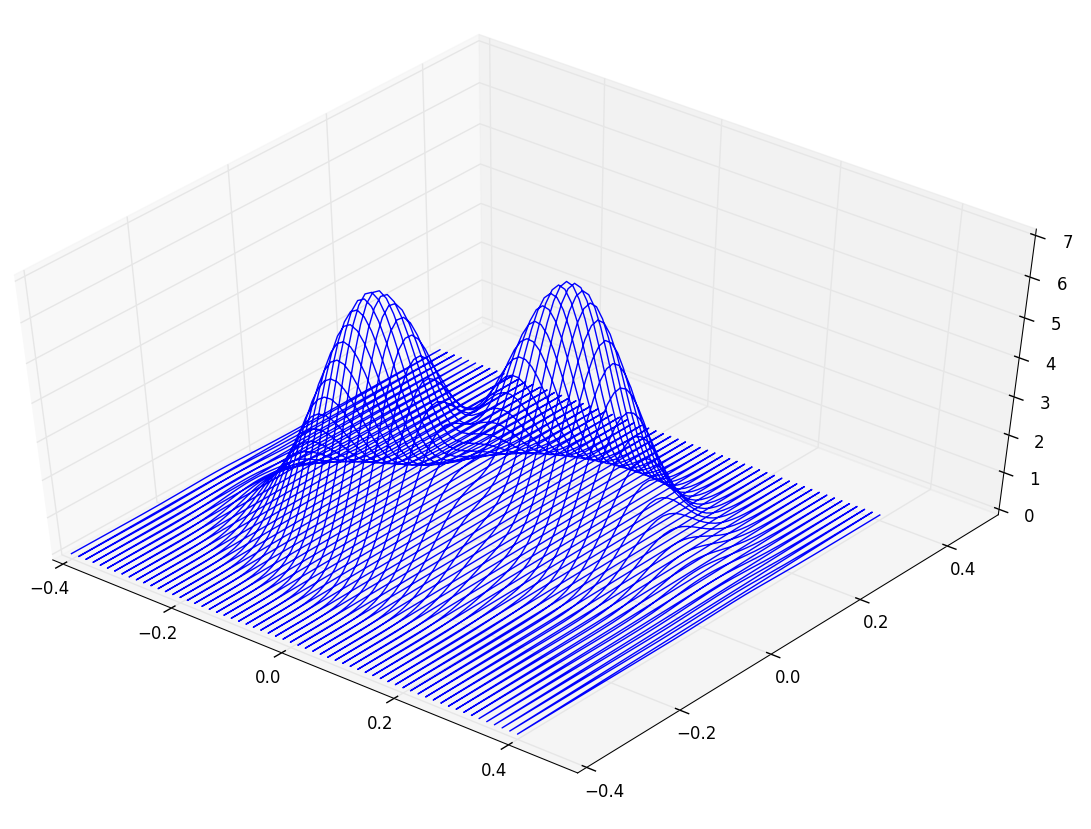}
  \caption{Density at $t=0.004$}
  \label{fig:sfig2}
\end{subfigure}%
\caption{Time dynamics of the cell density $n(t,x)$ obtained from the two-dimensional scheme LF with $s=100$ on a square domaine $[-0.4,0.4] \times [-0.4,0.4]$. Parameter values: $\alpha_1=0.33$, $D_n=1$, $D_S=0.001$.}
\label{fig:fig}
\end{center}
\end{figure}
Here the contants $\alpha_x$ and $\alpha_y$ are defined by
\begin{equation*}\label{n.6.10}
\alpha_x = \max_{k=1,\cdots,6} \{|\lambda_k^1|\}, \quad \text{and} \quad \alpha_y = \max_{k=1,\cdots,6} \{|\lambda_k^2|\},
\end{equation*}
where $\lambda_k^1$ (respectively $\lambda_k^2$) is the eigenvalue of the the jacobian matrix $F'_1(U)$ (respectively $F'_2(U))$.\\

Next, the approximation $U^{k+1}_{i,j}$ is computed from the approximation $U^{k+\frac{1}{2}}_{i,j}$ by
\begin{equation}\label{n.6.6.10}
U^{k+1}_{i,j} = U^{k+\frac{1}{2}}_{i,j} + \Delta t R_d(U^{k+1}_{i,j}),
\end{equation}
with
\begin{equation}\label{n.6.11}
\displaystyle
R_d(U^{k+1}_{i,j})= \left(\;\;\begin{matrix}
0\\
\\
-\mu_1 (n u_1)^{k+1}_{i,j}+ \mu_2n^{k+1}_{i,j}\alpha_1\frac{N^{k+1}_{1,i+1,j}-N^{k+1}_{1,i-1,j}}{2 \Delta x}\\
\\
-\mu_1 (n u_2)^{k+1}_{i,j}+ \mu_2n^{k+1}_{i,j}\alpha_1\frac{N^{k+1}_{1,i,+1j}-N^{k+1}_{1,i,-1j}}{2 \Delta y}\\
\\
n^{k+1}_{i,j}\\
\\
-\sigma_1(N_1U_1)^{k+1}_{i,j}
\end{matrix}\;\;\right),
\vspace{0.25cm}
\end{equation}
As in the one space dimensional case we complete the system with Neumann boundary conditions
\begin{equation}
\nabla n\cdot \eta \big|_{\partial \Omega} = 0, \quad \text{and} \quad \nabla N_1\cdot \eta \big|_{\partial \Omega} = 0
\end{equation}
for the density $n$ and for the concentration $N_1$ and we impose Dirichlet boundary conditions for the flux $q:=nu$ and $q_1:=N_1U_1$:
\begin{equation}\label{n.6.13}
q\cdot \eta \big|_{\partial \Omega} = 0, \quad \text{and} \quad q_1\cdot \eta \big|_{\partial \Omega} = 0.
\end{equation}

\subsection{Numerical tests}
We present here some numerical experiments in both cases: in one space dimension and in the two dimensional case. For all numerical tests carried out below, we take
\begin{equation*}
\alpha_1(N_1)=0.33, \quad D_n=1, \quad \text{and} \quad D_S=0.001.
\end{equation*}
For the initial conditions we consider an initial datum for the chemical concentration $S$ (=$N_1$) and for the flux $nu$ which are at rest,
\begin{equation*}
S(0)=0, \quad \text{and} \quad (nu)(0)=0.
\end{equation*}
Concerning the density of cells $n$, we take
\begin{equation*}
n(0,x)=\frac{n_0}{2\pi \sigma^2} \Big( \exp \big(-\frac{(x-x_0)^2}{2\sigma^2} \big) + \exp \big(-\frac{(x+x_0)^2}{2\sigma^2} \big) \Big),
\end{equation*}
in one space dimension, where $n_0=5,$ $x_0=0.5$ and $\sigma=3.10^{-1}$. In two space dimension we consider \cite{filbert1}
\begin{equation*}
n(0,x,y)=\frac{n_0}{2\pi\sigma^2} \Big( \exp \big(-\frac{(x-x_0)^2 + (y-y_0)^2}{2\sigma^2} \big) + \exp \big(-\frac{(x+x_0)^2 + (y+y_0)^2}{2\sigma^2} \big) \Big),
\end{equation*}
where $n_0=0.25$, $(x_0,y_0)=(3\sigma,3\sigma)$ and $\sigma=3.10^{-2}$.\\

In the following, we denote by
\begin{enumerate}
\item[$\bullet$] WB: the well-balanced asymptotic preserving scheme \eqref{n.5.14};\\

\item[$\bullet$] KS: the scheme \eqref{n.5.17}, \eqref{n.5.20} for the Keller-Segel system;\\

\item[$\bullet$] LF: the Lax-Friedrichs scheme \eqref{n.6.5}-\eqref{n.6.6.10}.
\end{enumerate}

We illustrate in Figure 1. the behavior of the WB scheme at successive times ($t=0.01$, $0.02$, $0.06$, $0.08$). It can be seen that with the evolution of time we observe the union of the two initial high density regions of $n$. In Figure 2. we plot at successive times ($t=0.03$, $0.04$, $0.05$, $0.07$) the density of cells obtained from the WB scheme for different values of $\varepsilon$ ($\varepsilon=5^{-k}$, $k=0,1,2,3,7,9$). We also compare with the numerical result obtained with the KS scheme. Clearly the WB scheme converge as $\varepsilon \longrightarrow 0$ to the KS limit. It illustrate the result of in Proposition \ref{psp}.

The behavior of the model \eqref{mod.n} in the two-dimensional case is illustrated in the Figure 3. where we plot the density of cells obtained from LF scheme at different times ($t=0.001$, $0.002$, $0.004$). As in the one-dimensional case we observe the union of the two initial high density regions of $n$.


\end{document}